\newtheorem{thm}{Theorem}[section]
\newtheorem{lemma}[thm]{Lemma}
\newtheorem{prop}[thm]{Proposition}
\newtheorem{cor}[thm]{Corollary}
\theoremstyle{definition}
\newtheorem{defi}[thm]{Definition}
\newtheorem{example}[thm]{Example}
\theoremstyle{remark}
\newtheorem{remark}[thm]{Remark}
\numberwithin{equation}{section}
\newcommand{\<}{\langle}
\renewcommand{\>}{\rangle}
\newcommand{\D}{\mathcal{D}}      
\newcommand{\Gammahat}{{\widehat{\Gamma}}}
\newcommand{\Hall}{\mathcal{H}}
\newcommand{\Om}{\Omega}
\newcommand{\CC}{\mathbb{C}}       
\newcommand{\Z}{\mathbb{Z}}       
\newcommand{\g}{\mathfrak{g}}
\newcommand{\h}{\mathfrak{h}}
\newcommand{\n}{\mathfrak{n}}
\DeclareMathOperator{\Ind}{Ind}
\DeclareMathOperator{\Hom}{Hom}
\DeclareMathOperator{\Ext}{Ext}
\DeclareMathOperator{\End}{End}
\DeclareMathOperator{\Rep}{Rep}
\begin{document}
\title{Coxeter Elements and Root Bases}
\author{A. Kirillov}
 \address{Department of Mathematics, SUNY at Stony Brook, 
            Stony Brook, NY 11794, USA}
    \email{kirillov@math.sunysb.edu}
    \urladdr{http://www.math.sunysb.edu/\textasciitilde kirillov/} 
\author{J. Thind}
 \address{Department of Mathematics, SUNY at Stony Brook, 
            Stony Brook, NY 11794, USA}
    \email{jthind@math.sunysb.edu}
\maketitle

\begin{abstract}
Let $\g$ be a Lie algebra of type $A,D,E$ with fixed Cartan subalgebra $\h$, root system $R$ and Weyl group $W$. We show that a choice of Coxeter element $C \in W$ gives a root basis for $\g$. Moreover, using the results of \cite{kirillov-thind} we show that this root basis gives a purely combinatorial construction of $\g$, where root vectors correspond to vertices of a certain quiver $\Gammahat$, and with respect to this basis the structure constants of the Lie bracket are given by paths in $\Gammahat$. This construction is then related to the constructions of Ringel and Peng and Xiao.
\end{abstract}

\section{Introduction}

Let $\Gamma$ be a Dynkin graph of type $A,D,E$. Let $\g$ and $U_{q}( \g )$ be the corresponding Lie algebra and quantum group respectively. By choosing an orientation $\Om$ of $\Gamma$, one obtains a quiver $\overrightarrow{\Gamma} = ( \Gamma , \Om )$. Ringel used the category $\Rep (\overrightarrow{\Gamma} )$ of representations of $\overrightarrow{\Gamma}$ to realise $\n_{+}$ and $U_{q} \n_{+}$ (see \cite{ringel}, \cite{ringel2}). Peng and Xiao then used a related category, $\D^{b} \Rep (\overrightarrow{\Gamma}) /T^{2}$, to realise the whole Lie algebra $\g$. The drawback of these constructions is the necessity of choosing an orientation of the Dynkin diagram. 

Motivated by these results and the ideas of Ocneanu \cite{ocneanu}, the main goal of this paper is to use a Coxeter element, and the results in \cite{kirillov-thind}, to construct a root basis in the Lie algebra $\g$ and to determine the structure constants of the Lie bracket in purely combinatorial terms. 

In \cite{kirillov-thind} it was shown a choice of Coxeter element gives a bijection between $R$ and a certain quiver $\Gammahat$, which identifies roots in $R$ and vertices in $\Gammahat$. This bijection then identifies vertices in $\Gammahat$ with basis vectors $E_{\alpha}$. Using this identification and choice of basis, it is possible to determine the structure constants of the Lie bracket from paths in $\Gammahat$. Thus it is possible to realise the Lie algebra $\g$ completely in terms of the quiver $\Gammahat$. This construction is then independent of any choice of orientation of $\Gamma$ or choice of simple roots.

The case of $U_{q} (\g)$ for $q \neq 1$, is also of interest. However, a full analysis is the subject of ongoing research.

The main result will now be stated. The proof of this theorem will be left to Section~\ref{s:rootvectors} and Section~\ref{s:ringelhall}. In Section~\ref{s:ringelhall} this construction will be related to the constructions of Ringel and Peng-Xiao.

\begin{thm}\label{t:main}
Let $\g$ be a Lie algebra of type $A,D,E$ with fixed Cartan subalgebra $\h$. This gives a root system $R$ with Weyl group $W$. Fix a Coxeter element $C\in W$.
\begin{enumerate}
\item The choice of a Coxeter element $C$ gives a root basis $\{ E_{\alpha} \}_{\alpha \in R}$ for $\g$.
\item Let $\< \cdot , \cdot \>$ be the de-symmetrization of the bilinear form $( \cdot , \cdot )$ given by $$\< x,y \> = ( (1-C)^{-1} x, y).$$ Then the Lie bracket is given by 
$$ [E_{\alpha} , E_{\beta} ] =\begin{cases}

(-1)^{ \< \alpha , \beta \>} E_{\alpha + \beta} &\text{ for } \alpha + \beta \in R \\
0 &\text{ for } \alpha + \beta \not \in R \text{ and } \alpha \neq -\beta
\end{cases}.$$
\end{enumerate}
\end{thm}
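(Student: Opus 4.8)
The plan is to build the basis $\{E_\alpha\}$ from the bijection between $R$ and the vertices of $\Gammahat$ established in \cite{kirillov-thind}, and then to verify the bracket formula one case at a time, exploiting the $C$-equivariance of the whole picture. First I would recall from \cite{kirillov-thind} that the Coxeter element $C$ acts on $\Gammahat$, and that this action corresponds to the action of $C$ on $R$; in particular the orbits of $C$ on $R$ have size $h$ (the Coxeter number) and there are $\ell = \rk \g$ of them. The idea is to fix, for each $C$-orbit, a single ``seed'' root $\alpha_0$, choose $E_{\alpha_0}$ to be any nonzero vector in the root space $\g_{\alpha_0}$, and then \emph{define} $E_{C\alpha} := \tau(E_\alpha)$, where $\tau$ is a fixed Lie algebra automorphism lifting $C$ (a representative of $C$ in $N(H)/H$, chosen once and for all, e.g.\ via Tits' extended Weyl group so that $\tau^h$ is a known central/inner element). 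This immediately gives a root basis, proving part (1), and it reduces the verification of part (2) to checking the bracket on pairs of roots where at least one lies in a fixed set of orbit representatives, because $\langle C\alpha, C\beta\rangle = \langle \alpha,\beta\rangle$ (the desymmetrized form is $C$-invariant since $(\cdot,\cdot)$ is and $C$ commutes with $(1-C)^{-1}$) and $C(\alpha+\beta) = C\alpha + C\beta$.

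Next I would dispose of the easy cases of the bracket formula. If $\alpha = -\beta$ the formula says nothing (the bracket lands in $\h$ and is not constrained here). If $\alpha+\beta\notin R$ and $\alpha\neq-\beta$, then $\g_{\alpha+\beta}=0$ in a simply-laced algebra (here one uses that $\alpha+\beta$ cannot be $0$ and cannot be a root, so the only possibility, $[\g_\alpha,\g_\beta]\subseteq\g_{\alpha+\beta}$, forces the bracket to vanish); so this case is automatic and requires no sign bookkeeping. The substance is the case $\alpha+\beta\in R$. Here $[E_\alpha,E_\beta] = N_{\alpha,\beta}E_{\alpha+\beta}$ for some scalar $N_{\alpha,\beta}$, and the claim is precisely $N_{\alpha,\beta} = (-1)^{\langle\alpha,\beta\rangle}$. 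In the $ADE$ case one knows $N_{\alpha,\beta}=\pm1$ for any Chevalley-type normalization, so the content is entirely about signs, i.e.\ about showing that the cocycle $\varepsilon(\alpha,\beta):=(-1)^{\langle\alpha,\beta\rangle}$ is the one associated to \emph{this} particular choice of root vectors.

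The heart of the proof is therefore a cocycle-matching argument. I would show that $\varepsilon(\alpha,\beta) = (-1)^{\langle\alpha,\beta\rangle}$ is a $2$-cocycle on the root lattice with values in $\{\pm1\}$ (this is standard: bilinearity of $\langle\cdot,\cdot\rangle$ gives the cocycle identity, and $\langle\alpha,\alpha\rangle \equiv (\alpha,\alpha)/2 \cdot(\text{something}) \equiv 1 \bmod 2$ handles the diagonal, since $(\alpha,\alpha)=2$), and that it is ``asymmetric'' in the right way, namely $\varepsilon(\alpha,\beta)\varepsilon(\beta,\alpha) = (-1)^{(\alpha,\beta)}$, because $\langle\alpha,\beta\rangle + \langle\beta,\alpha\rangle = ((1-C)^{-1}\alpha + ((1-C)^{-1})^t\alpha,\beta)$ and $(1-C)^{-1} + (1-C^{-1})^{-1} = \mathrm{id}$ (an identity for the adjoint of $C$ with respect to $(\cdot,\cdot)$, using $C^t = C^{-1}$). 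Any two such normalized, correctly-asymmetric $2$-cocycles on the root lattice of a simply-laced type differ by a coboundary, i.e.\ there is a function $f:R\to\{\pm1\}$ with $N_{\alpha,\beta} = f(\alpha)f(\beta)f(\alpha+\beta)^{-1}\varepsilon(\alpha,\beta)$; rescaling $E_\alpha\mapsto f(\alpha)E_\alpha$ then makes $N_{\alpha,\beta}$ equal to $\varepsilon(\alpha,\beta)$ on the nose. The remaining point — and this is where the Coxeter element and \cite{kirillov-thind} must be used essentially — is that the rescaling can be taken to be \emph{$C$-equivariant}, i.e.\ $f(C\alpha) = f(\alpha)$, which is exactly what is needed for the rescaled vectors to still satisfy $E_{C\alpha} = \tau(E_\alpha)$. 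This works because $\varepsilon$ is already $C$-invariant, so $f$ may be chosen constant on $C$-orbits; concretely, one picks $f$ freely on the $\ell$ orbit representatives and propagates, and checks the cocycle constraints are compatible with this — here the combinatorics of $\Gammahat$ from \cite{kirillov-thind}, in particular which sums $\alpha+\beta$ of basis roots are again roots and how these relations sit inside $C$-orbits, is what guarantees no obstruction arises.

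I expect the main obstacle to be precisely this last equivariance/compatibility step: showing that the coboundary correcting an arbitrary Chevalley normalization to the $(-1)^{\langle\cdot,\cdot\rangle}$ cocycle can be chosen constant on $C$-orbits \emph{simultaneously} with the constraint $E_{C\alpha}=\tau(E_\alpha)$, which pins down $N_{\alpha,\beta}$ rigidly once it is known on one pair per orbit pattern. Equivalently, one must verify that the automorphism $\tau$ does not introduce an uncontrollable sign when $\alpha+\beta$ and $C\alpha+C\beta$ lie in the same orbit, i.e.\ that $N_{C\alpha,C\beta} = N_{\alpha,\beta}$ is consistent with $\langle C\alpha,C\beta\rangle = \langle\alpha,\beta\rangle$ rather than forcing an extra factor from $\tau^h$. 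Resolving this will require the explicit description of $\Gammahat$ and the $C$-action on it from \cite{kirillov-thind}, and possibly a careful choice of the representative $\tau$ (e.g.\ the product of simple reflection lifts in the order dictated by $C$) so that the relevant signs are controlled; the rank-one and rank-two sub-configurations of roots inside a single $C$-orbit are what one ultimately checks by hand.
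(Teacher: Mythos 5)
Your proof plan takes a genuinely different route from the paper, and the differences matter in both directions.

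For part (1) the spirit is similar --- propagate root vectors along $C$-orbits starting from seeds --- but the mechanism differs. You propose to lift $C$ to an automorphism $\tau$ via a Tits-type representative in $N(H)/H$ and set $E_{C\alpha}:=\tau(E_\alpha)$. The paper instead uses Lusztig/Jantzen braid operators: it fixes a height function $h$, takes the adapted reduced expression for $w_0$, defines $E_{\gamma_k}=T_{i_1}\cdots T_{i_{k-1}}(E_{i_k})$, and then proves a ``fundamental relation'' $E_{C\alpha}=\bigl(\prod_{j-i}T_{\gamma_j}\bigr)T_\alpha(E_\alpha)$ that recasts the construction as propagation by an operator $T_C$ along $C$-orbits. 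The well-definedness problem you flag --- that going all the way around an $h$-fold orbit might produce a spurious factor from $\tau^h$ --- is exactly what the paper's Proposition 4.4 addresses: using Corollary 3.3 it computes $E_\alpha'=K_\alpha^{-1}E_\alpha K_\alpha$, which collapses to $E_\alpha$ at $q=1$. Your plan acknowledges this issue but does not resolve it; a Tits lift does \emph{not} in general satisfy $\tau^h=1$ (there is $2$-torsion), so ``a known central/inner element'' is doing real unproven work.

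For part (2) the approaches diverge completely, and here your proposal has a genuine gap. You propose a cocycle-matching argument: show $N_{\alpha,\beta}$ and $\varepsilon(\alpha,\beta)=(-1)^{\langle\alpha,\beta\rangle}$ are cohomologous $2$-cocycles, rescale by a coboundary $f$, and then argue $f$ can be chosen constant on $C$-orbits so that the rescaled basis still satisfies $E_{C\alpha}=\tau(E_\alpha)$. The last step is the whole difficulty, and you defer it to ``the combinatorics of $\Gammahat$'' and a check ``by hand.'' But the vanishing of the ordinary cohomology class of $N/\varepsilon$ does not imply vanishing of its class in $C$-equivariant cohomology; there is a potential obstruction (a class on the quotient, or equivalently a failure of $H^1$ of the cyclic group $\langle C\rangle$ acting on coboundaries to vanish), and nothing in the sketch rules it out. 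The paper never confronts this at all: instead it proves (Theorem 4.8) that the constructed basis is adapted to \emph{every} compatible pair $(\Pi_h,\Omega_h)$, and then (Proposition 5.3 and Corollary 5.4) identifies $E_\alpha\mapsto[M_\alpha]$ with Ringel's Hall algebra basis, where the bracket formula $[M_\alpha,M_\beta]=(-1)^{\langle M_\alpha,M_\beta\rangle}M_{\alpha+\beta}$ with $\langle\cdot,\cdot\rangle$ the Euler form is already Ringel's theorem (extended to all of $\g$ by Peng--Xiao). In other words, the paper buys the sign computation wholesale from Hall algebra theory and never needs a coboundary correction; your route is more self-contained in principle but leaves the hardest step --- the equivariant triviality of the correcting coboundary --- unproved, and that is precisely the point where a blind attempt could fail.

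Two smaller remarks. First, for the case $\alpha+\beta\notin R$, $\alpha\neq-\beta$ you are right that the bracket vanishes for degree reasons, but you should also note $\alpha+\beta\neq 0$ is part of the hypothesis so this is immediate; no issue there. Second, your identity $\langle\alpha,\beta\rangle+\langle\beta,\alpha\rangle=(\alpha,\beta)$ (via $(1-C)^{-1}+(1-C^{-1})^{-1}=\mathrm{id}$ and $C^t=C^{-1}$) is correct and is indeed the de-symmetrization property; that part of the cocycle bookkeeping is sound.
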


\section{Preliminaries}

\subsection{Notation} Let $\g$ be a simple Lie algebra of type $A,D,E$, and let $\h$ be a fixed Cartan subalgebra. Denote by $R$ the root system, $W$ the Weyl group, and $\Gamma$ the Dynkin diagram associated to the pair $(\g , \h)$. Thus $\Gamma$ is a Dynkin diagram of type $A,D,E$.

Let $\Pi = \{ \alpha_{i} \}_{i\in \Gamma}$ denote a set of simple roots. 

Since the Weyl group acts simply-transitively on sets of simple roots, there is a unique element which takes $\Pi$ to $-\Pi$. This element is called the longest element and denoted by $w_{0}$.

For $i \in \Gamma$ define $i \ \check{}$ by $- \alpha_{i \ \check{}}= w_{0} ( \alpha_{i} )$, where $w_{0} \in W$ is the longest element.

A set of simple roots $\Pi$ is compatible with a Coxeter element $C \in W$ if there is a reduced expression $C= s_{i_{1}} s_{i_{2}} \cdots s_{i_{r}}$, where each simple reflection appears exactly once. In other words, $\Pi$ is compatible with $C$ if $l^{\Pi} (C) = r$, where $l^{\Pi}$ is the length of a reduced expression in terms of the simple reflections $s_{i}$.

Let $U_{q} (\g)$ be the corresponding quantum group. It is generated by elements $E_{i} , F_{i} , K_{i}^{\pm 1}$, where $i\in \Gamma$. In particular, for $q=1$ this gives the universal enveloping algebra $U(\g)$ of $\g$.

\subsection{The Quiver $\Gammahat$}
Given the Dynkin diagram $\Gamma$ of type $A,D,E$ with Coxeter number $h$, construct a quiver $\Gammahat \subset \Gamma \times \Z_{2h}$ as follows:
\begin{enumerate}
\item Choose a ``parity" function $p:\Gamma \to \Z_{2}$, so that $p(i) = p(j)+1$ for $i,j$ connected in $\Gamma$.
\item Using $p$, define the vertex set of $\Gammahat$ to be $\Gammahat_{0} = \{ (i,n) | \ p(i) + n \equiv 0 \ \ \text{mod2} \}$.
\item The arrows are given by $(i,n) \to (j,n+1)$ for $i,j$ connected in $\Gamma$.
\item Define a ``twist" map $\tau :\Gammahat \to \Gammahat$ by $\tau (i,n) = (i,n+2)$.
\end{enumerate}

\begin{figure}[t]
        \includegraphics[height=2.50in]{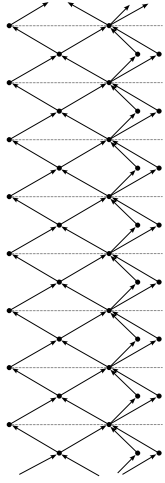}
        \caption{The quiver $\Gammahat$ for graph $\Gamma = D_{5}$. Recall that $\Gammahat$ is periodic, so the arrows leaving the top level are the same as the incoming arrows at the bottom level.}\label{f:Ihat-D5}
    \end{figure}

\begin{example}
For the graph $ \Gamma = D_{5}$ the quiver $\Gammahat$ is shown in
    Figure~\ref{f:Ihat-D5}. Note that this is the Auslander-Reiten quiver of the category $\D^{b} ( \overrightarrow{\Gamma} ) / T^{2}$ for any choice of orientation $\Om$. Here the $\Z_{2h}$ direction is vertical and the translation acts vertically, while in most of the literature the $\Z_{2h}$ direction is horizontal and the translation acts horizontally to the right.
\end{example}

A function $h: \Gamma \to \Z_{2h}$ such that $h(i) = h(j) \pm 1$ for $i,j$ connected in $\Gamma$ will be called a ``height function". Note that such a map defines an orientation on $\Gamma$ by $i \to j$ if $i,j$ are connected and $h(j)=h(i)+1$. This orientation will be denoted by $\Om_{h}$. A height function $h$ also gives an embedding of the quiver $(\Gamma , \Om_{h})$ in $\Gammahat$, given by $i \mapsto (i,h(i))$. The image of such an embedding is called a ``slice" and is denoted by $\Gamma_{h}$.

For a height function $h$, if $i \in \Gamma$ is a sink or source for $\Om_{h}$ define a new height function $s_{i} h$ by 

$$s_{i} h(j) = \begin{cases}
h(i) \pm 2 \ \text{ if } j=i \ \text{ where the sign is + for i a source, - for i a sink} \\
h(j) \ \text{ if } j \neq i
\end{cases}$$
The orientation determined by $s_{i} h$ is denoted by $s_{i} \Om$ and is obtain by reversing all arrows at $i$.

Define a function  $\< \cdot ,\cdot\>_{\Gammahat} \colon \Gammahat \times \Gammahat\to\Z$
    by setting 

    \begin{align*}
    \< (i,n),(j,n) \>_{\Gammahat} &= \delta_{ij}, \\
          \< (i,n),(j,n+1) \>_{\Gammahat} &=\text{the number of paths}
                (i,n)\to \cdots \to  (j,n+1)\\
            &= \text{the number of edges between } i,j \text{ in } \Gamma.
     \end{align*}

    Then for any $q=(k,m)\in \Gammahat$ use the relation 
        $$
        \< q, (i,n) \>_{\Gammahat} 
          -\sum_{j-i} \< q, (j,n+1) \>_{\Gammahat} 
        +\< q, (i,n+2) \>_{\Gammahat} = 0
        $$ for $i,j$ connected in $\Gamma$, to extend the definition.
        
It was shown in \cite{kirillov-thind} (Proposition 7.4) that this function is well-defined.

Given a Coxeter element $C\in W$, it was shown in \cite{kirillov-thind} that there is a bijection $R \to \Gammahat$ with the following properties:
\begin{enumerate}
\item It identifies the Coxeter element $C$ with the ``twist" $\tau: \Gammahat \to \Gammahat$.
\item It gives a bijection between simple systems $\Pi$, compatible with $C$, and height functions $h: \Gamma \to \Gammahat$. 
\item For each height function $h$ one obtains an explicit description of the corresponding positive roots and negative roots as disjoint connected subquivers of $\Gammahat$, as well as a reduced expression for the longest element $w_{0}$ in the Weyl group. The reduced expression for $w_{0}$ is given as a sequence of source to sink reflections taking the slice $\Gamma_{h^{\Pi}}$ to the slice $\Gamma_{h^{-\Pi}}$.
\item There is a de-symmetrization of the inner product on $R$, denoted by $\< \cdot , \cdot \>$ which is analogous to the Euler form in quiver theory. Moreover, under the bijection $\Phi$, this form is identified with $\< \cdot , \cdot \>_{\Gammahat}$ in $\Gammahat$.
\end{enumerate}

\begin{figure}[ht]
\includegraphics[height=2.50in]{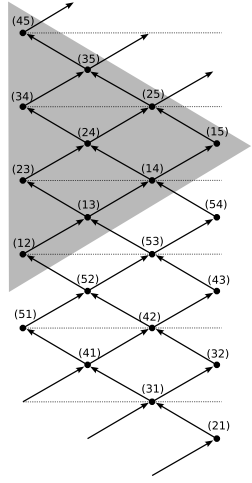}
\caption{The bijection $\Phi : R \to \Gammahat$ for $\Gamma = A_{4}$. For each vertex in $\Gammahat$ the corresponding root $\alpha \in R$ is shown. The notation $(ij)$ stands for $e_{i} - e_{j}$. The set of positive roots corresponding to $\Pi$ is shaded. Recall that $\Gammahat$ is periodic, so that arrows leaving the top level are identified with the incoming arrows on the bottom level.}\label{f:phiA4}
\end{figure}

\begin{example}\label{e:a41}
For $\Gamma = A_{4}$ with $\Pi = \{ e_{1} - e_{2} , e_{2} - e_{3} , e_{3}-e_{4} , e_{4}-e_{5} \}$ and $C=s_{1} s_{2} s_{3} s_{4}$ the bijection $R \to \Gammahat$ is given in Figure~\ref{f:phiA4}.
\end{example}

\section{Braid Group Action}
In this section the definition and relevant results of the braid group operators as defined in \cite{jantzen} are reviewed. For more details see \cite{jantzen}, or \cite{lusztig}.
 
Fix a system of simple roots $\Pi$. Let $E_{i}, F_{i}, K_{i}^{\pm 1}$ denote the corresponding generators of $U_{q} (\g)$. \\
For simple roots $\alpha_{i}$ define operators $T_{i}, T_{i}^{\prime}$ on any finite dimensional module $V$ by setting for $v\in V_{\lambda}$:
$$T_{i}(v) = \sum_{a,b,c \geq 0;-a+c-b=m} (-1)^{b} q^{b-ac} \frac{E_{i}^{(a)}}{[a]!} \frac{F_{i}^{(b)}}{[b]!} \frac{E_{i}^{(c)}}{[c]!} v$$
$$T_{i}^{\prime} (v) = \sum_{a,b,c \geq 0;-a+c-b=m} (-1)^{b}q^{ac-b} \frac{E_{i}^{(a)}}{[a]!}  \frac{F_{i}^{(b)}}{[b]!} \frac{E_{i}^{(c)}}{[c]!} v$$
with $m=(\lambda, \alpha_{i} \check{} \ )$.

Then there are unique automorphisms of $U_{q} (\g)$, also denoted by $T_{i}, T_{i}^{\prime}$ so that for any $u\in U_{q} (\g)$ and $v\in V$ we have $T_{i} (uv) = T_{i}(u) T_{i}(v)$. The operator $T_{i}$ acts on weights by the reflection $s_{i}$.

The automorphisms $T_{i}$ satisfy the braid relations:
$$ T_{i} T_{j} = T_{j}T_{i} \ \ \ \text{for} \ \ (\alpha_{i} , \alpha_{j})=0$$
$$ T_{i} T_{j} T_{i} = T_{j}T_{i}T_{j} \ \ \text{for} \ \ (\alpha_{i} , \alpha_{j} )=-1$$
For the automorphism $T_{i}$ there are the following formulae:

\begin{align*}
&T_{i} E_{i} = -F_{i}K_{i} & \\
&T_{i} F_{i} = -K_{i}^{-1} E_{i} & \\
&T_{i} E_{j} = E_{j} \ \ \text{for} \ \ (\alpha_{i} , \alpha_{j}) = 0 & \\
&T_{i} E_{j} = E_{i}E_{j} - q^{-1}E_{j}E_{i}  \ \ \text{for} \ \ (\alpha_{i} , \alpha_{j} ) =-1 & \\
&T_{i} F_{j} = F_{j} \ \ \text{for} \ \ (\alpha_{i} , \alpha_{j}) = 0 & \\
&T_{i} F_{j} = F_{i}F_{j} - q^{-1}F_{j}F_{i}  \ \ \text{for}  \ \ (\alpha_{i} , \alpha_{j} ) =-1 & \\
\end{align*}

In fact, there are automorphisms $T_{\alpha}$ for any root $\alpha$. As above, define $T_{\alpha}$ on a module $V$ by setting for $v\in V_{\lambda}$:
$$T_{i}(v) = \sum_{a,b,c \geq 0;-a+c-b=m} (-1)^{b} q^{b-ac} \frac{E_{\alpha}^{(a)}}{[a]!} \frac{F_{\alpha}^{(b)}}{[b]!} \frac{E_{\alpha}^{(c)}}{[c]!} v$$
$$T_{i}^{\prime} (v) = \sum_{a,b,c \geq 0;-a+c-b=m} (-1)^{b}q^{ac-b} \frac{E_{\alpha}^{(a)}}{[a]!}  \frac{F_{\alpha}^{(b)}}{[b]!} \frac{E_{\alpha}^{(c)}}{[c]!} v$$
where $E_{\alpha} , F_{\alpha} \in U_{q} (\g)$ satisfy the $U_{q} (sl_{2})$ relations and $m=(\lambda, \alpha_{i} \check{} \ )$.

\begin{lemma} Let $\Phi$ be an automorphism of $U_{q} (\g)$ such that $E_{\alpha} = \Phi (E_{i})$ and $F_{\alpha} = \Phi (F_{i})$. Then $T_{\alpha} = \Phi T_{i} \Phi^{-1}$. 
\end{lemma}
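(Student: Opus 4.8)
The plan is to verify the identity $T_\alpha = \Phi T_i \Phi^{-1}$ directly from the definition of the operators on a module, using the fact that $\Phi$, being an algebra automorphism, is compatible with the divided-power expressions appearing in the formula for $T_i$. First I would fix a finite-dimensional module $V$ and recall that $T_i$ was defined on $V$ by the explicit sum over $a,b,c\ge 0$ with $-a+c-b=m$, $m=(\lambda,\alpha_i\check{})$, involving the divided powers $E_i^{(a)}/[a]!$, $F_i^{(b)}/[b]!$, $E_i^{(c)}/[c]!$; and that $T_\alpha$ is defined by the same formula with $E_i,F_i$ replaced by the $U_q(sl_2)$-triple $E_\alpha,F_\alpha$. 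Since $\Phi(E_i)=E_\alpha$ and $\Phi(F_i)=F_\alpha$, and $\Phi$ is an algebra homomorphism, we get $\Phi(E_i^{(a)})=E_\alpha^{(a)}$ and likewise for $F$; so $\Phi$ carries the universal element defining $T_i$ to the one defining $T_\alpha$. The content of the lemma is then that conjugating the operator on $V$ by $\Phi$ matches the operator built from the $\Phi$-images of the generators — i.e. we must track how the weight-grading and the scalar $m$ transform.

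The key steps, in order, are: (1) reduce to checking equality of the two automorphisms on the generators $E_j,F_j,K_j^{\pm1}$ of $U_q(\g)$, since both sides are algebra automorphisms; (2) observe that it suffices to check the equality of the corresponding operators on any faithful family of finite-dimensional modules, because the automorphism $T_\alpha$ (resp. $T_i$) is by construction the unique automorphism inducing the stated operator on all finite-dimensional $V$; (3) on such a $V$, compute $\Phi\circ T_i\circ\Phi^{-1}$ acting on a weight vector: if $v\in V_\lambda$ then $\Phi^{-1}$ need not preserve weights, so instead I would work in the module $V$ twisted by $\Phi$ — that is, let $V^\Phi$ denote $V$ with $U_q(\g)$-action precomposed by $\Phi$ — and note that the operator $T_i$ on $V^\Phi$ is, by definition, exactly $\Phi^{-1}T_\alpha\Phi$ computed inside $\End(V)$ once one matches up weight spaces via the grading that $\Phi$ induces; (4) conclude by the uniqueness clause in Jantzen's construction that the global automorphisms agree, i.e. $T_\alpha=\Phi T_i\Phi^{-1}$.

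The main obstacle I anticipate is bookkeeping with the weight grading: the scalar $m=(\lambda,\alpha_i\check{})$ in the formula is defined using the \emph{original} Cartan data, whereas after conjugating by $\Phi$ the relevant weight is governed by $\alpha$ rather than $\alpha_i$. One has to check that $\Phi$ intertwines the two gradings in the precise way that makes $(\lambda,\alpha_i\check{})$ for the $V^\Phi$-weight equal the coweight pairing against $\alpha\check{}$ for the $V$-weight; equivalently, that $\Phi$ is compatible with the $\Ctimes$-action (or $K_i$-action) up to the change of root. Since $E_\alpha,F_\alpha$ are assumed to satisfy the $U_q(sl_2)$-relations, the relevant $\h$-element $\alpha\check{}$ is $[E_\alpha,F_\alpha]$ up to the usual normalization, and $\Phi([E_i,F_i])=[E_\alpha,F_\alpha]$; tracing this through identifies the two scalars and closes the argument. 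Once the grading is handled, the equality of the divided-power sums is immediate from $\Phi$ being an algebra map, so no genuine computation remains.
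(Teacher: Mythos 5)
The paper states this lemma without giving a proof (deferring to Jantzen), so there is no in-paper argument to compare against; I will evaluate your attempt on its own.

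Your overall strategy is the right one and essentially the only natural one: twist a finite-dimensional module $V$ by $\Phi$, compare the operator $T_i$ on $V^\Phi$ with $T_\alpha$ on $V$, and then invoke the characterization of the automorphism $T_i$ (resp.\ $T_\alpha$) as the unique automorphism of $U_q(\g)$ inducing the given operator on every finite-dimensional module. Your key observation --- that the $U_q(sl_2)$ relations force $\Phi(K_i)=K_\alpha$, via $\Phi([E_i,F_i])=[E_\alpha,F_\alpha]$, so that the relevant weight grading and the scalar $m$ transform correctly --- is exactly the point that needs checking, and you identify it correctly.

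One phrase is genuinely off and should be corrected, as it reflects a type error rather than a mere slip of wording: you write that ``the operator $T_i$ on $V^\Phi$ is, by definition, exactly $\Phi^{-1}T_\alpha\Phi$ computed inside $\End(V)$.'' But $\Phi$ is an automorphism of $U_q(\g)$ and has no action on $V$, so the composition $\Phi^{-1}T_\alpha\Phi$ inside $\End(V)$ does not make sense. The correct statement is simpler: $T_i$ acting on $V^\Phi$ and $T_\alpha$ acting on $V$ are \emph{the same} linear operator on the common underlying vector space, because the formula defining $T_i$ on $V^\Phi$ is written in terms of $\rho\Phi(E_i)=\rho(E_\alpha)$, $\rho\Phi(F_i)=\rho(F_\alpha)$, $\rho\Phi(K_i)=\rho(K_\alpha)$, which is word-for-word the formula defining $T_\alpha$ on $V$. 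With this in hand, the defining property $T_i(\rho'(u)v)=\rho'(T_i u)\,T_i(v)$ applied with $\rho'=\rho\circ\Phi$ reads $T_\alpha(\rho(\Phi u)\,v)=\rho(\Phi T_i u)\,T_\alpha(v)$; comparing against $T_\alpha(\rho(\Phi u)\,v)=\rho(T_\alpha\Phi u)\,T_\alpha(v)$ and using invertibility of the operator $T_\alpha$ and faithfulness over the family of finite-dimensional modules gives $T_\alpha\circ\Phi=\Phi\circ T_i$, i.e.\ $T_\alpha=\Phi T_i\Phi^{-1}$. Also, your step (1), ``check on generators,'' is not carried out and is not needed once you have (2)--(4); it can be dropped.
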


The automorphisms $T_{\alpha}$ satisfy relations similar to those of the $T_{i}$:

\begin{align*}
&T_{\alpha} E_{\alpha} = -F_{\alpha}K_{\alpha} & \\
&T_{\alpha} F_{\alpha} = -K_{\alpha}^{-1} E_{\alpha} & \\
&T_{\alpha} E_{\beta} = E_{\beta} \ \ \text{for} \ \ (\alpha , \beta) = 0 & \\
&T_{\alpha} E_{\beta} = E_{\alpha}E_{\beta} - q^{-1}E_{\beta}E_{\alpha}  \ \ \text{for} \ \ (\alpha , \beta ) =-1 & \\
&T_{\alpha} F_{\beta} = F_{\beta} \ \ \text{for} \ \ (\alpha , \beta ) = 0 & \\
&T_{\alpha} F_{\beta} = F_{\alpha}F_{\beta} - q^{-1}F_{\beta}F_{\alpha}  \ \ \text{for}  \ \ (\alpha , \beta ) =-1 & \\
\end{align*}

Since the operators $T_{i}$ satisfy the braid relations it is possible to define an operator $T_{w}$ for any $w\in W$. For any reduced expression $w= s_{i_{1}} s_{i_{2}} \cdots s_{i_{k}}$ for $w\in W$ define $T_{w} = T_{i_{1}} T_{i_{2}} \cdots T_{i_{k}}$. 

The following Lemma will be useful. It can be found in \cite{jantzen}, Proposition 8.20.

\begin{lemma}
If $w \in W$ is such that $w(\alpha_{i}) \in R_{+}$, then $T_{w} (E_{i}) \in U_{q}^{+}$. If, in addition, $w(\alpha_{i}) = \alpha_{j}$, then $T_{w} (E_{i}) = E_{j}$.
\end{lemma}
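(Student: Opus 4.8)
\emph{Approach.} The plan is to prove both assertions simultaneously by induction on the length $\ell(w)$, using the standard equivalence $w(\alpha_{i})\in R_{+}\iff\ell(ws_{i})=\ell(w)+1$. The base case $w=e$ is immediate, since $T_{e}(E_{i})=E_{i}$ and $e(\alpha_{i})=\alpha_{j}$ forces $i=j$. For $\ell(w)\ge 1$ I would choose a simple reflection $s_{k}$ with $\ell(s_{k}w)=\ell(w)-1$ and write $w=s_{k}w'$ with $\ell(w')=\ell(w)-1$, so that $T_{w}=T_{k}T_{w'}$. From $ws_{i}=s_{k}(w's_{i})$ and the elementary length inequalities one gets $\ell(w')+1\ge\ell(w's_{i})\ge\ell(ws_{i})-1=\ell(w')+1$, hence $\ell(w's_{i})=\ell(w')+1$; thus $w'(\alpha_{i})\in R_{+}$ and the inductive hypothesis applies to $w'$. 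It follows that $x:=T_{w'}(E_{i})$ lies in $U_{q}^{+}$ and is a weight vector of weight $\beta:=w'(\alpha_{i})\in R_{+}$, and since $s_{k}(\beta)=w(\alpha_{i})\in R_{+}$ we must have $\beta\neq\alpha_{k}$. Everything then reduces to showing $T_{k}(x)\in U_{q}^{+}$, together with, in the situation of the second assertion, the identification of $T_{k}(x)$ with the correct generator.

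\emph{The main step, and the obstacle.} The delicate point --- which I expect to be the real work --- is that $T_{k}$ does not preserve $U_{q}^{+}$ (already $T_{k}(E_{k})=-F_{k}K_{k}$), and not every element of the weight space $(U_{q}^{+})_{\beta}$ is carried back into $U_{q}^{+}$ by $T_{k}$. The resolution is to strengthen the inductive statement: one carries along the assertion that $x=T_{w'}(E_{i})$ is exactly the PBW root vector $E_{\beta}$ produced from a reduced word of $w's_{i}$ (with $\beta$ the last root in the associated convex order on $R_{+}\cap(w's_{i})(R_{-})$), hence that $x$ lies in the subalgebra $U_{q}^{+}[w's_{i}]\subseteq U_{q}^{+}$ generated by those root vectors. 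The induction then closes via the compatibility $T_{k}\bigl(U_{q}^{+}[v]\bigr)\subseteq U_{q}^{+}[s_{k}v]\subseteq U_{q}^{+}$, valid whenever $\ell(s_{k}v)=\ell(v)+1$: taking $v=w's_{i}$ (so $s_{k}v=ws_{i}$) gives $T_{w}(E_{i})=T_{k}(x)\in U_{q}^{+}[ws_{i}]\subseteq U_{q}^{+}$, which is the first assertion in its strengthened form. Establishing this compatibility is the crux: it requires knowing that the subalgebra $U_{q}^{+}[v]$ is intrinsic --- independent of the chosen reduced word, which rests on the braid relations and Tits' theorem --- and that $T_{k}$ matches it with the ``lower part'' of $U_{q}^{+}[s_{k}v]$. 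This is precisely the content packaged in \cite{jantzen}, Prop.~8.20 and its neighbourhood (see also \cite{lusztig}). At the specialization $q=1$ there is a shortcut: $T_{w}$ restricts to a Lie algebra automorphism of $\g$ carrying the root space $\g_{\alpha_{i}}$ onto $\g_{w(\alpha_{i})}$, so $T_{w}(e_{i})$ is automatically a nonzero multiple of a root vector for $w(\alpha_{i})$, which lies in $\n_{+}\subseteq U^{+}$ exactly when $w(\alpha_{i})\in R_{+}$.

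\emph{The refined statement.} Finally, suppose $w(\alpha_{i})=\alpha_{j}$. Then $w^{-1}(\alpha_{j})=\alpha_{i}\in R_{+}$, so $\ell(s_{j}w)=\ell(w)+1$, and hence the simple reflection $s_{k}$ peeled off above necessarily has $k\neq j$. Since $s_{k}(\beta)=\alpha_{j}$ with $\beta\in R_{+}$ and $k\neq j$, in type $A,D,E$ this forces $\beta=\alpha_{j}$ if $k$ and $j$ are not joined in $\Gamma$, and $\beta=\alpha_{j}+\alpha_{k}$ if they are. In the first case the inductive hypothesis for the second assertion gives $T_{w'}(E_{i})=E_{j}$, whence $T_{w}(E_{i})=T_{k}(E_{j})=E_{j}$ because $(\alpha_{k},\alpha_{j})=0$. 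In the second case the first assertion already gives $T_{w}(E_{i})\in U_{q}^{+}$; it has weight $\alpha_{j}$, so $T_{w}(E_{i})=cE_{j}$ for some scalar $c$, and one is left to check $c=1$ --- either by tracking the leading coefficient through the defining sums for the operators $T$, or, in the language above, by noting that $T_{w}(E_{i})$ is the PBW root vector attached to a \emph{simple} root, and such root vectors coincide with the standard generators. This completes the induction.
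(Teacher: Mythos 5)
The paper itself gives no proof of this lemma --- it is stated and attributed to Jantzen's book, Proposition 8.20 --- so there is no internal argument to match against. Your sketch is a faithful reconstruction of the standard argument that the reference carries out: induction on $\ell(w)$, peel one simple reflection $s_k$ off the left, verify the length arithmetic to apply the hypothesis to $w'$, and close the induction via the compatibility $T_k\bigl(U_q^+[v]\bigr)\subseteq U_q^+[s_k v]$ when $\ell(s_k v)=\ell(v)+1$. You have correctly identified both the real obstacle (that $T_k$ does not preserve $U_q^+$) and the right strengthening of the induction (track that $T_{w'}(E_i)$ is the PBW root vector inside $U_q^+[w's_i]$).

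The one spot that is not yet closed is the final sub-case $\beta=\alpha_j+\alpha_k$, where you reduce to showing $c=1$ and offer two justifications. The first ("track the leading coefficient through the defining sums") is hand-waving. The second ("the PBW root vector attached to a simple root coincides with the standard generator") is, taken at face value, exactly the second assertion of the lemma, so as written it is circular. To make it non-circular, observe that $(ws_i)^{-1}(\alpha_j)=s_i w^{-1}(\alpha_j)=s_i(\alpha_i)=-\alpha_i\in R_{-}$, so $ws_i$ admits a reduced expression \emph{beginning} with $s_j$; for that reduced expression the PBW root vector attached to $\alpha_j$ is, tautologically, $E_j$. One then invokes independence of the PBW root vectors from the choice of reduced expression (a theorem of Lusztig and Levendorskii--Soibelman that lives in the same circle of ideas you are already using) to conclude $T_w(E_i)=E_j$. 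With that filled in, your argument is complete and is, in substance, the route the cited reference takes.
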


For the case to be considered in the following sections, this result gives the following important Corollary.

\begin{cor}\label{c:jantzen}
Let $w_{0} \in W$ be the longest element. Then $T_{w_{0}} (E_{i \ \check{}}) = T_{i} E_{i} = -F_{i}K_{i}$.
\end{cor}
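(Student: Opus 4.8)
The plan is to reduce the statement to the Lemma quoted above (\cite{jantzen}, Prop.~8.20) by peeling a single simple reflection off the left of $w_{0}$. The starting observation is that the involution $i \mapsto i \ \check{}$ satisfies $w_{0}(\alpha_{i \ \check{}}) = -\alpha_{i}$: since $w_{0}$ is an involution, applying $w_{0}$ to the defining relation $w_{0}(\alpha_{i}) = -\alpha_{i \ \check{}}$ gives $\alpha_{i} = -w_{0}(\alpha_{i \ \check{}})$, which in passing also shows $(i \ \check{})\check{} = i$.

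Next I would factor $T_{w_{0}}$. Because $w_{0}$ is the longest element, $l(s_{i} w_{0}) = l(w_{0}) - 1$, so prepending $s_{i}$ to a reduced expression for $s_{i} w_{0}$ produces a reduced expression for $w_{0}$; since the operators $T_{j}$ satisfy the braid relations, $T_{w_{0}}$ does not depend on the chosen reduced expression and we obtain $T_{w_{0}} = T_{i}\, T_{s_{i} w_{0}}$. Now $(s_{i} w_{0})(\alpha_{i \ \check{}}) = s_{i}(-\alpha_{i}) = \alpha_{i} \in R_{+}$, and $\alpha_{i}$ is simple, so the Lemma applied to $w = s_{i} w_{0}$ and the simple root $\alpha_{i \ \check{}}$ gives $T_{s_{i} w_{0}}(E_{i \ \check{}}) = E_{i}$.

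Combining these, $T_{w_{0}}(E_{i \ \check{}}) = T_{i} T_{s_{i} w_{0}}(E_{i \ \check{}}) = T_{i}(E_{i}) = -F_{i} K_{i}$, where the last equality is the displayed formula $T_{i} E_{i} = -F_{i} K_{i}$ and the middle expression $T_{i} E_{i}$ is exactly the asserted intermediate value. The argument is essentially a two-line computation once the factorization $T_{w_{0}} = T_{i} T_{s_{i} w_{0}}$ is in place, so I do not anticipate a genuine obstacle: the only points needing (trivial) care are the well-definedness of $T_{w_{0}}$, already guaranteed by the braid relations, and the identity $w_{0}(\alpha_{i \ \check{}}) = -\alpha_{i}$ established in the first paragraph.
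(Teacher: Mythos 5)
Your proof is correct and follows essentially the same route as the paper: factor $T_{w_{0}} = T_{i} T_{s_{i}w_{0}}$, check that $s_{i}w_{0}$ sends $\alpha_{i\,\check{}}$ to the simple root $\alpha_{i}$, apply the quoted Lemma to get $T_{s_{i}w_{0}}(E_{i\,\check{}}) = E_{i}$, and finish with $T_{i}E_{i} = -F_{i}K_{i}$. You merely spell out a couple of points the paper leaves implicit (the identity $w_{0}(\alpha_{i\,\check{}}) = -\alpha_{i}$ and the well-definedness of $T_{w_{0}}$ via the braid relations).
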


\begin{proof}
Let $w_{0}= s_{i}w$ be a reduced expression for $w_{0}$, so that $T_{w_{0}} = T_{i} T_{w}$. Then since $$w(\alpha_{i \ \check{}}) = s_{i}w_{0} (\alpha_{i \ \check{}}) = s_{i} (-\alpha_{i}) = \alpha_{i}$$ the Lemma gives $T_{w} (E_{i \ \check{}}) = E_{i}$, and the result follows by applying $T_{i}$.
\end{proof}

\section{Longest Element and Construction of Root Vectors}\label{s:rootvectors}

Let $\Pi$ be a simple system, and let $R= R_{+} \cup R_{-}$ be the corresponding polarization. Let $w_{0}$ be the longest element. A reduced expression $w_{0} = s_{i_{1}} s_{i_{2}} \cdots s_{i_{l}}$ is said to be {\em adapted} to an orientation $\Om$ of $\Gamma$ if $i_{k}$ is a source for $s_{i_{1}} \cdots s_{i_{k-1}} \Om$. In particular $i_{1}$ is a source of $\Om$.

\begin{lemma}\label{l:redexp}
Given any orientation $\Om$, there is a reduced expression adapted to $\Om$, and moreover, any two expressions adapted to $\Om$ are related by $s_{i}s_{j} = s_{j}s_{i}$ with $n_{ij}=0$.
\end{lemma}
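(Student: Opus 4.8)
The plan is to establish the existence of an adapted reduced expression by induction on the number of simple reflections (equivalently, on $|\Gamma|$, since each appears once — wait, no: $w_0$ has length equal to the number of positive roots, not $|\Gamma|$), so more precisely induction on the length $l = l(w_0)$, using the fact that any orientation $\Om$ of a tree has a source. For the uniqueness statement, the plan is to analyze the ambiguity in the greedy construction and show it is governed precisely by commuting reflections at non-adjacent sources.

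**First I would prove existence.** Since $\Gamma$ is a tree, any orientation $\Om$ has at least one source $i$. The key sub-claim is that if $i$ is a source of $\Om$, then there is a reduced expression for $w_0$ beginning with $s_i$, i.e. $l(s_i w_0) = l(w_0) - 1$; this is automatic because $s_i w_0 < w_0$ for every simple $i$ (as $w_0$ sends all positive roots to negative ones, so $w_0^{-1}(\alpha_i) = w_0(\alpha_i) \in R_-$). Now $w_0 = s_i (s_i w_0)$ and $s_i w_0$ is again a longest-type element — more carefully, $s_i w_0$ is the longest element for the polarization $s_i(\Pi)$... actually it is cleaner to just say $w_0 = s_i w'$ with $w'$ reduced and $l(w') = l(w_0) - 1$, and observe that $w'$ is the longest element of $W$ written as a product starting from the orientation $s_i \Om$ (the reversal of all arrows at $i$, which now makes $i$ a sink). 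One then needs to know that the reduced word for $w'$ can be continued: pick a source $j$ of $s_i \Om$, etc. The induction hypothesis applied to the orientation $s_i \Om$ and the element $w'$ (which has strictly smaller length) yields an adapted expression $w' = s_{i_2}\cdots s_{i_l}$, and prepending $s_i$ gives the desired adapted expression for $w_0$ relative to $\Om$. The point making the induction go through is that $s_i \cdots s_{i_{k-1}} \Om$ in the definition for $w_0$ matches $s_{i_2}\cdots s_{i_{k-1}}(s_i\Om)$ in the definition for $w'$.

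**For uniqueness**, suppose $w_0 = s_{i_1}\cdots s_{i_l} = s_{j_1}\cdots s_{j_l}$ are both adapted to $\Om$. Then $i_1$ and $j_1$ are both sources of $\Om$. If $i_1 = j_1$, cancel and induct (noting both tails are adapted to $s_{i_1}\Om$). If $i_1 \neq j_1$, then since $\Gamma$ is a tree and both are sources, the vertices $i_1, j_1$ are \emph{not} adjacent, so $s_{i_1} s_{j_1} = s_{j_1} s_{i_1}$. I would then show $j_1$ remains a source of $s_{i_1}\Om$ (true because $i_1, j_1$ non-adjacent means the arrows at $j_1$ are untouched), and by the exchange/deletion condition rewrite the first expression so that $s_{j_1}$ is brought to the front: concretely, $j_1$ must appear as a source at some stage, and since it is already a source of $\Om$ one can commute it past $s_{i_1}$. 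This reduces to the equal-first-letter case after one commutation. Iterating, any two adapted expressions differ by a sequence of such commutations of non-adjacent (hence commuting) simple reflections — and all intermediate words are still reduced expressions for $w_0$, so $n_{ij} = 0$ for every transposition used.

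**The main obstacle** I anticipate is the bookkeeping in uniqueness: showing that when $i_1 \neq j_1$ one can actually transport $s_{j_1}$ to the front of the first word through a chain of elementary commutations, each legitimate because the relevant vertices are simultaneously sources (hence non-adjacent). The cleanest route is probably to prove the stronger statement that the set of adapted reduced expressions for $(w_0,\Om)$ forms a single commutation class, and to use the standard fact (Tits) that any two reduced expressions for the same Weyl group element are connected by braid moves, then argue that \emph{no} braid move of the form $s_is_js_i \leftrightarrow s_js_is_j$ can occur within adapted expressions — because at the moment such a move would apply, the vertex $i_k$ being used is a source of the current orientation, forcing its neighbors to be non-sources, so a length-$3$ braid relation (which needs adjacency) is never available. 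Hence only commutation moves occur, which is exactly the claim.
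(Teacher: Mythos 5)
Your uniqueness argument (the first one, before the ``cleanest route'' alternative) is essentially the paper's argument, but it is stated too loosely at the crucial step. You say ``one can commute $s_{j_1}$ past $s_{i_1}$'' and ``this reduces to the equal-first-letter case after one commutation,'' but $s_{j_1}$ may first appear far from the front. The content of the paper's proof is precisely the bookkeeping you flag as the obstacle: writing $w_{0}=ws_{i}w_{1}s_{j}w_{2}$ with $s_j$ not in $w_1$, one shows that $j$ \emph{remains a source} throughout $w_1$ (since each letter of $w_1$ is itself a source at its step, hence non-adjacent to the source $j$), so every letter of $w_1$ commutes with $s_j$ and $s_j$ can be carried all the way to the front through a whole chain of commutations, not just one. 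You should supply this chain explicitly rather than invoke ``the exchange/deletion condition,'' which by itself does not respect adaptedness.

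Your proposed ``cleanest route'' via Tits has a genuine gap. Tits's theorem connects \emph{all} reduced expressions by braid moves, and the connecting path may leave the set of adapted expressions; knowing that a length-$3$ braid move would leave the adapted set does not let you conclude that two adapted expressions are connected by commutations. (Also note that a subword $s_is_js_i$ with $i,j$ adjacent \emph{can} occur inside an adapted expression --- e.g.\ $s_1s_2s_1$ in $A_2$ adapted to $1\to 2$; what is true is that \emph{applying} the braid move leaves the adapted set, which is a different statement and not what the Tits argument needs.)

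For existence, the intended induction is not well-founded as written. You want to pass from $w_0$ and $\Om$ to $w'=s_iw_0$ and $s_i\Om$, but $w'$ is not the longest element (the longest element of $W$ is the same group element regardless of the polarization), so ``the induction hypothesis applied to $w'$'' has no content until you state a strengthened hypothesis and prove it. Concretely, to run the greedy construction you must show that whenever $u=s_{i_1}\cdots s_{i_k}$ is an adapted prefix and $j$ is a source of the current orientation, then $u(\alpha_j)\in R_+$ (so $l(us_j)=k+1$); this is a real fact requiring an argument, and your sketch does not supply it. The paper instead derives existence from the $\Gammahat$ machinery of \cite{kirillov-thind}: adapted expressions correspond to sequences of source-to-sink slice moves in $\Gammahat$, and such a sequence from $\Gamma_h$ to $\Gamma_{-h}$ always exists.
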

\begin{proof}
Recall that any height function $h$ determines an orientation $\Om_{h}$ and that for any orientation $\Om$ there is a choice of $h$ so that $\Om = \Om_{h}$. So take some $h$ corresponding to $\Om$. Note that any reduced expression adapted to $\Om$ gives a sequence of source to sink moves taking the slice $\Gamma_{h}$ to the slice $\Gamma_{-h}$ where $\Gamma_{-h}$ is the slice corresponding to the simple roots $-\Pi$.

Let $w_{0} = s_{i_{1}} \cdots s_{i_{l}}$ and $w_{0} = s_{i_{1}^{\prime}} \cdots s_{i_{l}^{\prime}}$ be two different reduced expressions adapted to $\Om$. Let $k$ be the first index where they differ. Write $i_{k} = i$ and $i_{k}^{\prime} = j$ to simplify notation. Then there are reduced expressions
$$w_{0}  = ws_{i} w_{1}s_{j}w_{2}$$
$$w_{0} = ws_{j} w_{1}^{\prime} s_{j} w_{2}^{\prime}$$ 
where $s_{j}$ does not appear in $w_{1}$ and $s_{i}$ does not appear in $w_{1}^{\prime}$. Thus $i,j$ are both sources for $w\Om$ and hence $n_{ij} = 0$. Note that since $w_{1}$ is obtained as a sequence of source to sink reflections, and since $s_{j}$ does not appear in $w_{1}$, $j$ remains a source during this process. Hence if $s_{k}$ appears in $w_{1}$ then $k$ is not adjacent to $j$, so that $n_{jk} = 0$. Thus $w_{1}s_{j} = s_{j}w_{1}$. which gives:

\begin{align*}
w_{0}  &= ws_{i} w_{1}s_{j}w_{2} \\
&= ws_{i}s_{j}w_{1}w_{2}\\
&= ws_{j}s_{i}w_{1}w_{2}
\end{align*}
So it is possible to make the two reduced expressions agree at the index $k$ using only the relation $s_{i}s_{k} = s_{k}s_{i}$ for $n_{ik} = 0$. Continuing in this fashion it is possible to make the expressions agree at every index using only this relation. 

\end{proof}

It is well known that a reduced expression $w_{0} = s_{i_{1}} \cdots s_{i_{l}}$, adapted to $\Om$, gives an ordering of the positive roots $R= \{ \gamma_{1} , \ldots , \gamma_{l} \}$ by setting $\gamma_{k} = s_{i_{1}} \cdots s_{i_{k-1}} ( \alpha_{k} )$. Such an expression also gives roots vectors $E_{\alpha} ,F_{\alpha}$ for $\alpha \in R_{+}$ as follows:

\begin{equation}\label{e:stdrootvec}
E_{\gamma_{k}} = T_{i_{1}} \cdots T_{i_{k-1}} (E_{i_{k}}) 
\end{equation}
\begin{equation}
F_{\gamma_{k}} = T_{i_{1}} \cdots T_{i_{k-1}} (F_{i_{k}})
\end{equation}

Note that since the $T_{i}$ satisfy the braid relation, Lemma~\ref{l:redexp} implies that the root vectors defined this way do not depend on the choice of reduced expression adapted to $\Om$.

Note that if $\gamma_{k} = s_{i_{1}} \cdots s_{i_{k-1}} \alpha_{i_{k}}$ then $\gamma_{k} = s_{\gamma_{k-1}} \cdots s_{\gamma_{1}} \alpha_{i_{k}}$, and the longest element can be expressed as $w_{0} = s_{\gamma_{l}} \cdots s_{\gamma_{1}}$.

Since $T_{\gamma_{k}} = (T_{i_{1}} \cdots T_{i_{k-1}} )T_{i_{k}} (T_{i_{1}} \cdots T_{i_{k-1}})^{-1}$, then as for reflections,
$$T_{i_{1}} \cdots T_{i_{k}} = T_{\gamma_{k}} \cdots T_{\gamma_{1}}$$
so the root vectors $E_{\gamma_{k}}$ given in Equation~\ref{e:stdrootvec} can be expressed as
\begin{equation}\label{e:rootvecgamma}
E_{\gamma_{k}} = T_{\gamma_{k-1}} \cdots T_{\gamma_{1}} E_{i_{k}}
\end{equation}

\begin{defi}
Let $\Pi$ be a set of simple roots, and $\Om$ be an orientation of $\Gamma$ and let $w_{0} = s_{i_{1}} \cdots s_{i_{l}}$ be a reduced expression adapted to $\Om$. A root basis $\{ E_{\alpha} \}_{\alpha \in R}$ is said to be adapted to the pair $(\Pi , \Om)$ if for $\alpha \in R_{+}$ the vector $E_{\alpha}$ is given by Equation~\ref{e:stdrootvec}, or equivalently by Equation~\ref{e:rootvecgamma}.
\end{defi}

\subsection{Change of Orientation} For a reduced expression $w_{0} = s_{i_{1}} s_{i_{2}} \cdots s_{i_{l}}$, adapted to $\Om$, define a new reduced expression $w_{0} =s_{i_{2}} \cdots s_{i_{l}} s_{i_{1}  \check{}} $ which is adapted to $s_{1} \Om$. Then this gives a new enumeration of positive roots $\{ \gamma_{1}^{\prime}, \ldots \gamma_{l}^{\prime} \}$, and a new collection of root vectors:
$$ \gamma_{1}^{\prime} = s_{i_{i}} (\gamma_{i_{2}}) , \gamma_{2}^{\prime} = s_{i_{1}} (\gamma_{i_{3}}), \ldots, \gamma_{l} = \alpha_{i_{1} \check{}}$$
\begin{equation}\label{e:newrootvec}
E_{\gamma^{\prime}} = T_{i}^{-1} (E_{s_{i} \gamma}) \ \ \ \ \text{for} \ \ \  \gamma \neq \alpha_{i_{1}} 
\end{equation}

\subsection{Coxeter Element} Now consider the case where there is a fixed Coxeter element $C \in W$ and hence an identification $R \to \Gammahat$ as in \cite{kirillov-thind}. In this case a choice of height function $h$ is identified with a set of simple roots $\Pi$ compatible with $C$, and hence determines a polarization $R= R_{+}^{h} \cup R_{-}^{h}$. A height function also determines a reduced expression for $w_{0}$ adapted to the orientation $\Om_{h}$. This expression is obtained from $\Gammahat$ as a sequence of source to sink reflections which take the slice $\Gamma_{h^{\Pi}}$ to the slice $\Gamma_{h^{-\Pi}}$. 

Using this reduced expression, there is an associated ordering of the positive roots which gives a completion of the partial order given by paths in $\Gammahat$. Note that the completion depends on the reduced expression. 

Now choose a height function $h$. Then using the reduced expression for $w_{0}$ obtained above, it is possible to define a collection of root vectors $E_{\alpha}$ for $\alpha \in R_{+}^{h}$ using Equation~\ref{e:stdrootvec}.

Under the identification $R \to \Gammahat$ suppose that $\alpha = (i,n)$, then $C\alpha = (i,n+2)$. For $j$ connected to $i$, denote by $\gamma_{j}$ the root corresponding to vertex $(j, n+1)$. The collection of roots $\{ \alpha, \gamma_{j} , C\alpha \}$ is said to satisfy the fundamental relation in $\Gammahat$. Such a collection is depicted in Figure~\ref{f:fundrel}.

\begin{figure}[ht]
\includegraphics[height=1.50in]{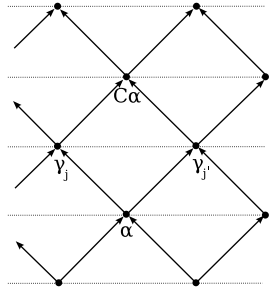}
\caption{A collection of roots $\alpha , \gamma_{j} , C\alpha \in \Gammahat$ satisfying the fundamental relation.}\label{f:fundrel}
\end{figure}

\begin{lemma}
Let  $\alpha, \gamma_{j} ,C (\alpha) \in R_{+}^{h}$ satisfy the fundamental relation in $\Gammahat$. Then the corresponding root vectors satisfy:

\begin{equation}\label{e:rootvec}
E_{C(\alpha)} = (\prod_{j-i} T_{\gamma_{j}} ) T_{\alpha} (E_{\alpha})
\end{equation}
\end{lemma}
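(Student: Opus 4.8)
The plan is to produce, from the fixed reduced expression for $w_0$ adapted to $\Om_h$ that reaches the vertex $\al$, a \emph{different} reduced expression adapted to a different orientation in which the root $C(\al)$ is produced directly from $E_\al$ by a string of the braid operators attached to the intermediate roots $\gamma_j$. Concretely, recall that under $R\to\Gammahat$ the picture around $\al=(i,n)$ is: $\al$ sits at level $n$, the $\gamma_j=(j,n+1)$ at level $n+1$, and $C(\al)=(i,n+2)$ at level $n+2$, and that the reduced expression for $w_0$ is read off as a sequence of source-to-sink moves sweeping a slice from $\Gamma_{h^\Pi}$ down to $\Gamma_{h^{-\Pi}}$. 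First I would choose the (equivalent, by Lemma~\ref{l:redexp}) reduced expression in which, immediately after the vertex $\al$ has been created, we perform the reflection $s_\al$ (turning the source $\al$ into a sink), then the reflections $s_{\gamma_j}$ for all $j-i$ (these vertices $\gamma_j$ become sources exactly after $s_\al$ is applied, and they are mutually non-adjacent since $\Gamma$ has no triangles, so they commute), and these moves together carry the slice past the ``diamond'' so that the next new vertex is precisely $C(\al)=\tau(\al)$.

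The second step is to translate this into root vectors. By Equation~\ref{e:rootvecgamma}, if $\al=\gamma_k$ in the ordering attached to that reduced expression, then $E_\al = T_{\gamma_{k-1}}\cdots T_{\gamma_1} E_{i_k}$, and the next few root vectors are obtained by continuing to apply $T_{\gamma_k}=T_\al$, then the $T_{\gamma_j}$, in order. Since after these moves the newly created vertex is $C(\al)$, and since a new vertex created by a source-to-sink move $s_{i_m}$ always has root vector $T_{\gamma_{m-1}}\cdots T_{\gamma_1} E_{i_m}$, applying $T_\al$ and then the $T_{\gamma_j}$ to $E_\al$ should land on $E_{C(\al)}$; because the $T_{\gamma_j}$ for $j-i$ pairwise commute (their roots are orthogonal), the product $\prod_{j-i}T_{\gamma_j}$ is unambiguous, giving exactly Equation~\ref{e:rootvec}. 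I would also need to check that $C(\al)$ is indeed positive, i.e. $C(\al)\in R_+^h$, which is part of the hypothesis, so that all of these intermediate roots $\al,\gamma_j,C(\al)$ genuinely occur in the enumeration of $R_+$ coming from this reduced expression and the root vectors above are the ``standard'' ones in the sense of the Definition.

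The step I expect to be the main obstacle is the combinatorial bookkeeping in the first paragraph: verifying that the source-to-sink sequence $s_\al,(s_{\gamma_j})_{j-i}$ really can be taken to occur consecutively and in this order within \emph{some} reduced expression of $w_0$ adapted to (a translate of) $\Om_h$, and that after exactly these moves the frontier of the sweep has advanced past the diamond so that $C(\al)=\tau(\al)$ is the very next vertex. This is where the structure of $\Gammahat$ and the description of $w_0$ from \cite{kirillov-thind} must be used carefully — in particular the fact that $\al$ being a source forces all its neighbours $\gamma_j$ at the next level to be, just afterwards, exactly the new sources, with no other vertex interfering, and that the slice-move interpretation of the reduced word lets us reorder using only commuting reflections (Lemma~\ref{l:redexp}) so that these $|\{j : j-i\}|+1$ reflections are grouped together. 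Once the correct reduced expression is in hand, the passage to the root-vector identity via Equations~\ref{e:rootvecgamma} and the commutativity of the $T_{\gamma_j}$ is essentially formal.
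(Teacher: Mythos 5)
Your proposal is correct and follows essentially the same route as the paper's proof: both rest on the assertion that there is a reduced expression for $w_0$ adapted to $\Om_h$ of the form $w\,s_i\,(\prod_{j-i}s_j)\,s_i\,w'$, i.e.\ in which the ``diamond'' around $\alpha$ is swept consecutively, and then extract the identity by comparing the standard root vectors attached to that expression (the paper does this explicitly with $T_w$, $T_{\alpha_i}$, $T_{\alpha_j}$; you do it equivalently via Equation~\ref{e:rootvecgamma}). You are right to flag the existence of that specific adapted reduced expression as the step requiring the most care — the paper simply asserts it, relying implicitly on Lemma~\ref{l:redexp} and on the slice-sweep description of $w_0$ — so the two arguments match both in structure and in where the real combinatorial work is concentrated.
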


\begin{proof}
Let $h$ be a fixed height function and let $\Pi = \{ \alpha_{1} , \ldots , \alpha_{r} \}$ denote the corresponding set of simple roots and $s_{i}$ the corresponding simple reflections.

Let $\alpha = (i,n) , \gamma_{j} = (j,n+1) , C\alpha = (i,n+2)$ and $$w_{0} = ws_{i} ( \prod_{j-i} s_{j}) s_{i} w^{\prime}$$ a reduced expression adapted to $\Om_{h}$.

Then
\begin{align*}
&E_{\alpha} = T_{w} E_{\alpha_{i}}  \text{ and }  T_{\alpha} = T_{w} T_{\alpha_{i}} T_{w}^{-1} \\
&E_{\gamma_{j}} = T_{w} T_{\alpha_{j}} E_{\alpha_{j}} \text{ and } T_{\gamma_{j}} = T_{w} T_{\alpha_{i}} T_{\alpha_{j}} T_{\alpha_{i}}^{-1} T_{w}^{-1} \\
&E_{C\alpha} = T_{w} T_{\alpha_{i}} (\prod_{j-i} T_{\alpha_{j}}) E_{\alpha_{i}}
\end{align*}
where the product $\prod_{j-i}$ is taken over all $j$ connected to $i$ in $\Gamma$.

On the other hand, using the first two formulae, and comparing with the third one obtains:
\begin{align*}
(\prod_{j-i} T_{\gamma_{j}}) T_{\alpha} E_{\alpha} &= \prod_{j-i}( T_{w} T_{\alpha_{i}} T_{\alpha_{j}} T_{\alpha_{i}}^{-1} T_{w}^{-1} ) T_{w} T_{\alpha_{i}} T_{w}^{-1} T_{w} E_{\alpha_{i}} \\
&= T_{w} T_{\alpha_{i}} ( \prod_{j-i} T_{\alpha_{j}}) T_{\alpha_{i}}^{-1} T_{w}^{-1} T_{w} T_{\alpha_{i}} T_{w}^{-1} T_{w} E_{\alpha_{i}} \\
&=  T_{w} T_{\alpha_{i}} ( \prod_{j-i} T_{\alpha_{j}}) E_{\alpha_{i}} \\
& = E_{C\alpha}\\
\end{align*}

\end{proof}

Now fix a height function $h$, and hence a choice of compatible simple roots $\Pi_{h}$, an orientation $\Om_{h}$, a reduced expression $w_{0}$ and a slice $\Gamma_{h} \subset \Gammahat$. Define a root basis as follows:
\begin{enumerate}
\item For $\beta \in \Gamma_{h}$ choose $E_{\beta} \in \g_{\beta}$.
\item Since any root is of the form $\alpha = C^{k} \beta$ for some $k$ and some $\beta$, define $E_{\alpha}$ inductively using Equation~\ref{e:rootvec}, beginning with $E_{C\beta}$ for $\beta$ a source in $\Gamma_{h}$.
\end{enumerate}

Note that for $C^{h} \beta = \beta$ this procedure produces another root vector $E_{\beta}^{\prime} \in \g_{\beta}$.

\begin{prop}\label{p:basis} Let $E_{\alpha}, E_{\alpha}^{\prime}$ be the root vectors defined above.
\begin{enumerate}
\item For $q=1$ $E_{\alpha}^{\prime} = E_{\alpha}$, so this procedure produces a consistent root basis in $\g$.
\item For $q \neq 1$ $E_{\alpha}^{\prime} = K_{\alpha}^{-1} E_{\alpha} K_{\alpha}$.
\end{enumerate}
\end{prop}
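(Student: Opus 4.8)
The plan is to track how the inductive construction of $E_\alpha$ behaves as one goes all the way around the $\Z_{2h}$-cycle in $\Gammahat$, and to identify the ambiguity that appears when $C^h\beta = \beta$. Since applying the fundamental relation $h$ times to a source $\beta$ in $\Gamma_h$ amounts to applying the sequence of source-to-sink reflections encoded by the reduced expression $w_0 = s_{i_1}\cdots s_{i_l}$ adapted to $\Om_h$ (this is exactly the reduced expression obtained from the slice-to-slice moves in \cite{kirillov-thind}), the claim reduces to a statement about how $E_\beta$ transforms under $T_{w_0}$ composed with the analogous moves coming back from the slice $\Gamma_{-h}$ to $\Gamma_h$. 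The natural comparison point is Corollary~\ref{c:jantzen}: iterating Equation~\ref{e:rootvec} around the full cycle should be compared with $T_{w_0}(E_{i\,\check{}}) = -F_i K_i$, together with the analogous identity for the return leg, so that the composition of the two halves brings $E_\beta$ back to $\g_\beta$ but possibly conjugated by $K_\beta$.

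First I would set up the bookkeeping carefully: fix the height function $h$, the compatible simple system $\Pi$, and the reduced expression $w_0 = s_{i_1}\cdots s_{i_l}$ adapted to $\Om_h$ given by the slice moves. Next, I would prove the key lemma that for $\beta \in \Gamma_h$ a source, iterating Equation~\ref{e:rootvec} exactly enough times to reach $C^h\beta$ expresses $E_\beta'$ as $T_{w_0}$ applied to a root vector for $\beta^{\check{}}$ in the simple system $-\Pi$ (or rather, tracks the root vector through the braid operators for the full sequence of reflections). Then I would invoke Corollary~\ref{c:jantzen} and the formulae $T_i E_i = -F_i K_i$, $T_i F_i = -K_i^{-1}E_i$ to compute the resulting element explicitly. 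The point is that going around once picks up the composite of the $T_i$'s for a full reduced word for $w_0$ on the ``positive'' side and then the braid operators taking $-\Pi$ back to $\Pi$; each half contributes a sign and a $K$-twist, and for $q=1$ all the $K_\alpha$ act trivially on the Lie algebra (since $K_\alpha$ acts by conjugation, which is the identity at $q=1$ on weight vectors of the adjoint representation up to the scalar $q^{(\cdot,\cdot)} = 1$), forcing $E_\beta' = E_\beta$; while for $q\neq1$ the residual twist is exactly conjugation by $K_\alpha$, giving (2).

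The main obstacle I anticipate is the sign bookkeeping and making precise the claim that "going around the cycle once" corresponds to a specific product of braid operators whose effect is conjugation by $K_\alpha$ — in particular, showing that the two reduced expressions (one for the slice $\Gamma_h \to \Gamma_{-h}$ and one for $\Gamma_{-h} \to \Gamma_h$, obtained by shifting indices via $\check{}$) compose to give $C^h$, which acts trivially on roots, so that the braid operator product $T_{w_0} T_{w_0'}$ (suitably interpreted with the $\check{}$-shifts) fixes each weight space and acts there by a scalar that is a power of $q$ — hence is conjugation by a product of $K$'s. I would handle this by using Equation~\ref{e:newrootvec} repeatedly to relate the root vectors for successive slices $\Gamma_h, s_{i_1}\Gamma_h, \ldots$ and checking that after $l$ such steps one returns to a slice in the same $W$-orbit position but shifted by $C$; then the accumulated operator is $T_{i_1}\cdots T_{i_l} T_{i_1^{\check{}}} \cdots$, and since each $T_\alpha E_\alpha = -F_\alpha K_\alpha$, $T_\alpha F_\alpha = -K_\alpha^{-1} E_\alpha$, the two sign contributions cancel and one is left with conjugation by $K_\beta$. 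Setting $q=1$ then collapses this to the identity, giving part (1). The verification that no extra signs survive — i.e. that the number of $F \leftrightarrow E$ swaps is even — is the delicate computational heart of the argument, and I would do it by an explicit induction on the slice moves rather than a global cohomological argument.
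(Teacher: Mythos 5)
Your proposal takes essentially the same route as the paper: identify the full trip around the cycle with two applications of $T_{w_0}$, invoke Corollary~\ref{c:jantzen} (and its $F$-analogue $T_{w_0}F_{i\,\check{}}=-K_i^{-1}E_i$), and observe that the two minus signs cancel, leaving only the $K_\alpha$-conjugation. One caveat: the ``delicate computational heart'' you anticipate (an induction to check parity of $E\leftrightarrow F$ swaps) is unnecessary — once you know $E_i' = T_{w_0}(T_{w_0}E_i)$, the whole computation is a direct three-line evaluation with exactly two swaps, so the paper's proof is considerably more immediate than your plan suggests.
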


\begin{cor}\label{c:quantum}
Let $U_{q} \g$ denote the corresponding quantum group. For $q \neq 1$ there is a $\Z$-torsor of vectors $\{ E_{\alpha}^{k} \}$ for each root $\alpha$ that are related by $E_{\alpha}^{k+n} = K_{\alpha}^{n}E_{\alpha}^{k} K_{\alpha}^{-n}$.
\end{cor}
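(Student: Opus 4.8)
The final statement, Corollary~\ref{c:quantum}, follows at once from Proposition~\ref{p:basis}(2): running the inductive construction of the preceding subsection once more, starting from $E_\alpha'$ instead of $E_\alpha$, produces $K_\alpha^{-1}E_\alpha'K_\alpha=K_\alpha^{-2}E_\alpha K_\alpha^{2}$, and so on; since each $K_\alpha$ is invertible one may also run it backwards, and setting $E_\alpha^{k}:=K_\alpha^{k}E_\alpha K_\alpha^{-k}$ for $k\in\Z$ gives a family with $E_\alpha^{k+n}=K_\alpha^{n}E_\alpha^{k}K_\alpha^{-n}$. The family has no canonical origin, since the vector one would call $E_\alpha^{0}$ depends on the arbitrary choice of slice $\Gamma_h$ and of the vectors $E_\beta\in\g_\beta$ for $\beta\in\Gamma_h$; hence it is a $\Z$-torsor. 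The real content is therefore Proposition~\ref{p:basis}, which I would prove as follows.

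Since every root has the form $C^{k}\beta$ with $\beta\in\Gamma_h$, and each $E_{C^{k}\beta}$ is forced by Equation~\ref{e:rootvec} once the $E_\beta$, $\beta\in\Gamma_h$, are chosen, the only thing to check is that transporting $E_\beta$ once around the periodic quiver $\Gammahat$ by repeated use of Equation~\ref{e:rootvec} returns $E_\beta$ (for $q=1$) or $K_\beta^{-1}E_\beta K_\beta$ (for $q\ne1$). Write $M$ for this ``monodromy'' automorphism. The first step is to recognise $M$ as a product of braid operators: each application of Equation~\ref{e:rootvec} at a source $(i,n)$ of the current slice is one source-to-sink move whose inputs $\gamma_j=(j,n+1)$ are the neighbours of $(i,n)$ in the slice, so by the computation establishing Equation~\ref{e:rootvec}, the braid relations, and Lemma~\ref{l:redexp}, a complete pass of source-to-sink moves carrying $\Gamma_h$ to $\Gamma_{-h}$ realises on the root vectors the operator $T_{w_0}$ for the reduced expression of $w_0$ adapted to $\Om_h$ supplied by \cite{kirillov-thind}. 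Transporting a slice once around $\Gammahat$ is two such passes --- $\Gamma_h\to\Gamma_{-h}$ adapted to $\Om_h$, then $\Gamma_{-h}\to\Gamma_h$ adapted to $\Om_{-h}$ --- so $M$ is $T_{w_0}$ followed by the analogous operator for the opposite simple system $-\Pi$, whose $E$-generators are, up to scalars and Cartan factors, the $F_i$.

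The second step evaluates $M$. By Corollary~\ref{c:jantzen} and the Jantzen lemma preceding it, the first pass sends each positive-root vector of $\Pi$ to $\pm F_{\gamma'}K_{\,\cdot\,}$ for the corresponding positive root $\gamma'$, that is, to a scalar-and-$K$-twisted positive-root vector of $-\Pi$; the second pass, being the same construction for $-\Pi$, sends these back to positive-root vectors of $\Pi$, the two invocations of $T_\bullet E_\bullet=-F_\bullet K_\bullet$ contributing the sign $(-1)^{2}=1$ together with a residual product of $K$'s which I would compute to be trivial when $q=1$ and to equal $K_\beta^{-1}(\,\cdot\,)K_\beta$ when $q\ne1$. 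To pass from the slice roots to all of $R$, observe that the monodromy based at $C\beta$ is the conjugate of $M$ by the transport operator carrying $\beta$ to $C\beta$ (which has underlying weight map $\beta\mapsto C\beta$), so once $M(E_\beta)=E_\beta$ (resp. $K_\beta^{-1}E_\beta K_\beta$) holds for $\beta\in\Gamma_h$ it holds for every $C^{k}\beta$, hence for every root.

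The principal difficulty is the bookkeeping in the second step: one must verify that the source-to-sink passes genuinely reassemble into $T_{w_0}$ on the root vectors, rather than into $T_{w_0}$ composed with an inner automorphism by the $K_i$, and one must track the diagram automorphism $i\mapsto i\check{}$ so that the two passes compose to the identity on $\h^{*}$ and not merely to $w_0$. This is exactly where the two cases separate: the $K$-factors left over from the two applications of $T_\bullet E_\bullet=-F_\bullet K_\bullet$ cancel when $q=1$ but survive as $K_\beta^{-1}(\,\cdot\,)K_\beta$ when $q\ne1$ --- and this residue is precisely the source of the $\Z$-torsor in Corollary~\ref{c:quantum}.
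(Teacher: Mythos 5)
Your argument is correct and takes the paper's route: Corollary~\ref{c:quantum} is read off from Proposition~\ref{p:basis}(2) exactly as you say, and that proposition is proved by the very computation you sketch --- the monodromy around $\Gammahat$ is $T_{w_0}^2$, which Corollary~\ref{c:jantzen} evaluates as $T_{w_0}(T_{w_0}E_i)=-(-K_i^{-1}E_i)(K_i)=K_i^{-1}E_iK_i$, with $q=1$ killing the Cartan factor. The paper simply carries out in one line the final $K$-bookkeeping that you defer with ``I would compute\dots''.
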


\begin{proof} To simplify notation, set $E_{\alpha_{i}} = E_{i} , F_{\alpha_{i}} = F_{i} , K_{\alpha_{i}} = K_{i}$. Then using Corollary~\ref{c:jantzen} one obtains:
\begin{align*}
E_{i}^{\prime} &= T_{w_{0}} (T_{w_{0}} E_{i}) \\
&= T_{w_{0}}( -F_{i \ \check{}} \ K_{i \ \check{}} ) \\
&= -(T_{w_{0}} F_{i \ \check{ }})(T_{w_{0}} K_{i \ \check{}}) \\
&= -(-K_{i}^{-1} E_{i}) (K_{i}) \\
&= K_{i}^{-1} E_{i} K_{i}
\end{align*}
This proves the second part, and to get the first part set $q=1$ so that $K_{i} = 1$.
\end{proof}

\begin{thm}\label{p:adapted}
Let $h$ be any height function and denote the associated simple roots and orientation by $\Pi_{h}$ and $\Om_{h}$ respectively.
\begin{enumerate} 
\item The root basis defined above is adapted to the pair $( \Pi_{h} , \Om_{h} )$.
\item For this choice of root basis the Lie bracket is given by:
$$ [E_{\alpha} , E_{\beta} ] =\begin{cases}

(-1)^{ \< \alpha , \beta \>} E_{\alpha + \beta} &\text{ for } \alpha + \beta \in R \\
0 &\text{ for } \alpha + \beta \not \in R \text{ and } \alpha \neq -\beta
\end{cases}$$
\end{enumerate}
\end{thm}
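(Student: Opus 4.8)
The plan is to dispatch part (1) almost immediately from the material already set up, and then to prove the bracket formula in part (2) by reducing to rank-two (i.e.\ $\mathfrak{sl}_2$ and $\mathfrak{sl}_3$) computations.

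\textbf{Part (1).} The Lemma establishing Equation~\ref{e:rootvec} shows that the \emph{standard} root vectors attached to $(\Pi_h,\Omega_h)$ — those of Equation~\ref{e:stdrootvec}, i.e.\ the ones making up a basis adapted to $(\Pi_h,\Omega_h)$ — already satisfy the recursion \ref{e:rootvec} that \emph{defines} the basis $\{E_\alpha\}$. Since both collections agree on the slice $\Gamma_h$ (where each is the Chevalley generator $E_i$) and the recursion is deterministic along any reduced word adapted to $\Omega_h$, the two collections coincide on $R_+^h$. Proposition~\ref{p:basis}(1) guarantees the recursion closes up consistently after a full $C$-orbit ($E_\alpha'=E_\alpha$ at $q=1$), so there is no ambiguity, and Corollary~\ref{c:jantzen}, which identifies $T_{w_0}(E_{i\,\check{}})$ with $-F_iK_i$, identifies the continuation of the recursion past the opposite slice with the root vectors attached to $R_-$. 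As $h$ is arbitrary, the basis is adapted to $(\Pi_h,\Omega_h)$ for every $C$-compatible $h$.

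\textbf{Part (2), reduction.} The vanishing statements are automatic from weights: $[E_\alpha,E_\beta]\in\g_{\alpha+\beta}$, which is zero unless $\alpha+\beta\in R\cup\{0\}$, and $\alpha=-\beta$ is excluded. So fix $\alpha,\beta$ with $\alpha+\beta\in R$; in the simply-laced case this is equivalent to $(\alpha,\beta)=-1$, so specializing the braid-operator formula $T_\alpha(E_\beta)=E_\alpha E_\beta-q^{-1}E_\beta E_\alpha$ at $q=1$ gives $[E_\alpha,E_\beta]=T_\alpha(E_\beta)$. Since $T_\alpha$ is invertible and acts on weights by $s_\alpha$, this is a nonzero vector in $\g_{\alpha+\beta}$, hence $[E_\alpha,E_\beta]=c_{\alpha\beta}E_{\alpha+\beta}$ for a sign $c_{\alpha\beta}$ (a unit, as $T_\alpha$ carries basis vectors to basis vectors up to sign, by part (1) and \ref{e:newrootvec}). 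It remains to identify $c_{\alpha\beta}=(-1)^{\langle\alpha,\beta\rangle}$. I would induct on $\mathrm{ht}(\alpha+\beta)$ computed with respect to a height function $h$ chosen so that $\alpha,\beta,\alpha+\beta$ are all positive (possible because source-to-sink moves act transitively on acyclic orientations of the tree $\Gamma$, so the $C$-compatible orientations $\Omega_h$ exhaust all orientations; alternatively one first uses the $C$-equivariance of the recursion to translate $\alpha$ into a fixed slice). In the base case $\mathrm{ht}(\alpha+\beta)=2$ one has $\{\alpha,\beta\}=\{\alpha_p,\alpha_q\}$ for an edge of $\Gamma$, $E_\alpha,E_\beta$ are Chevalley generators, $[E_p,E_q]=T_p(E_q)$, and whether this equals $+E_{\alpha_p+\alpha_q}$ or $-E_{\alpha_p+\alpha_q}$ is dictated by whether the $\Omega_h$-adapted word for $w_0$ begins $s_ps_q$ or $s_qs_p$ — which matches the parity of $\langle\alpha_p,\alpha_q\rangle$ via the description of $\langle\cdot,\cdot\rangle$ as $\langle\cdot,\cdot\rangle_{\Gammahat}$ on the slice, using $\langle\alpha_p,\alpha_q\rangle+\langle\alpha_q,\alpha_p\rangle=(\alpha_p,\alpha_q)=-1$.

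\textbf{Part (2), inductive step.} After possibly swapping $\alpha\leftrightarrow\beta$ (which negates both sides of the claimed identity) assume $\alpha$ is not simple, choose a source $i$ of $\Omega_h$ with $s_i\alpha$ of strictly smaller height, and use $[T_i x,T_i y]=T_i[x,y]$ together with $E_{s_i\gamma}=T_i(E_\gamma)$ (valid for $\gamma\neq\alpha_i$, from \ref{e:newrootvec} and part (1)) to reduce the identity for $(\alpha,\beta)$ to the one for $(s_i\alpha,s_i\beta)$; since $s_i(\alpha+\beta)$ has smaller height and $(s_i\alpha,s_i\beta)=-1$, the inductive hypothesis applies, and one is left to check that the sign picked up is exactly $(-1)^{\langle\alpha,\beta\rangle-\langle s_i\alpha,s_i\beta\rangle}$.

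\textbf{Main obstacle.} That last check is the crux. Because $\langle\cdot,\cdot\rangle=((1-C)^{-1}\cdot,\cdot)$ is $C$-invariant but \emph{not} $W$-invariant, the source-to-sink reductions genuinely move the form, and one must control $\langle\alpha,\beta\rangle-\langle s_i\alpha,s_i\beta\rangle\bmod 2$. This is where the combinatorial model does the work: under $\Phi$ a source-to-sink move is a single elementary mutation of the slice, and its effect on $\langle\cdot,\cdot\rangle_{\Gammahat}$ — hence on the relevant parity — can be extracted from the defining relations of $\langle\cdot,\cdot\rangle_{\Gammahat}$ and the fundamental relation in $\Gammahat$. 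Assembling these local sign computations so that they fit together consistently across the induction (and, in the base case, matching the quantum-group normalization of the $E_i$ against $\langle\cdot,\cdot\rangle_{\Gammahat}$) is the technical heart of the argument.
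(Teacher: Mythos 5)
Your Part (1) argument is circular. You assert that ``both collections agree on the slice $\Gamma_h$ (where each is the Chevalley generator $E_i$)'' and conclude coincidence on $R_+^h$ for arbitrary $h$. But the basis $\{E_\alpha\}$ was built by choosing initial vectors on \emph{one} slice $\Gamma_{h_0}$ and propagating by the recursion in Equation~\ref{e:rootvec}. For $h\neq h_0$, the values of $\{E_\alpha\}$ on $\Gamma_h$ are whatever the recursion produces — the claim that these are precisely the $\Pi_h$-Chevalley generators (so that the $(\Pi_h,\Omega_h)$-adapted standard vectors and $\{E_\alpha\}$ share initial data) is exactly what needs proving. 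Equation~\ref{e:rootvec} only guarantees the two collections obey the same recursion; it does not furnish the matching of initial conditions on a different slice. The paper closes this gap by an induction on source-to-sink moves: one shows that if $\{E_\alpha\}$ is adapted to $(\Pi,\Omega)$ and $i$ is a source for $\Omega$, then $\{E_\alpha\}$ is also adapted to $(s_i\Pi,s_i\Omega)$, using the explicit comparison of reduced expressions for $w_0$ and the formula in Equation~\ref{e:newrootvec}. Your proof never makes this step, so ``as $h$ is arbitrary'' is unjustified.

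For Part (2) you take a genuinely different route from the paper: a direct braid-group computation, reducing by source-to-sink moves and inducting on height, versus the paper's appeal to Corollary~\ref{c:bracket} and Proposition~\ref{p:identification}, which transfers the bracket formula from Ringel's theorem on $\Hall_1$ (and Peng--Xiao's $\Hall_{\D}$). Your route is attractive because it stays inside $U_q(\g)$ and avoids Hall-algebra machinery, but as you state, it is not a proof: the crux — that the sign acquired at each source-to-sink move equals $(-1)^{\langle\alpha,\beta\rangle-\langle s_i\alpha,s_i\beta\rangle}$, and that these local parities assemble consistently — is left unverified. Since $\langle\cdot,\cdot\rangle$ is $C$-invariant but not $W$-invariant, there is real content in controlling $\langle\alpha,\beta\rangle-\langle s_i\alpha,s_i\beta\rangle\bmod 2$ via the defining relations of $\langle\cdot,\cdot\rangle_{\Gammahat}$; until that lemma is stated and proved, the argument is a plan rather than a demonstration. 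Note also that in your inductive step you use $E_{s_i\gamma}=T_i(E_\gamma)$, which relies on the basis being simultaneously adapted to $(\Pi,\Omega)$ and $(s_i\Pi,s_i\Omega)$ — i.e.\ on a correctly proved Part (1) — so the gap there propagates here as well.
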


\begin{proof}
Let $h$ be the height function used to construct the root basis $\{ E_{\alpha} \}$. By construction this basis is adapted to the pair $( \Pi_{h} , \Om_{h} )$. So it is enough to check that if $\{ E_{\alpha} \}$ is adapted to $( \Pi , \Om )$, and $i$ is a source for $\Om$, then $\{ E_{\alpha} \}$ is also adapted to $( s_{i} \Pi , s_{i} \Om )$.

Suppose that $\{ E_{\alpha} \}$ is adapted to $( \Pi , \Om )$ and that $i$ is a source. Let $\Pi = \{ \alpha_{1} , \ldots , \alpha_{r} \}$. Then since $i$ is a source and $w_{0}$ is adapted to $\Om$, the corresponding reduced expression for the longest element has the form $w_{0} = s_{i} s_{i_{2}} \cdots s_{i_{l}}$. By writing $\gamma_{k} = s_{i_{1}} \cdots s_{i_{k-1}} \alpha_{i_{k}}$, the longest element can be reexpressed as $w_{0} = s_{\gamma_{l}} \cdots s_{\gamma_{2}} s_{\alpha_{i}}$. (Note that since $i$ is a source, $\gamma_{1} = \alpha_{i}$.)

Then since $\{ E_{\alpha} \}$ is adapted it is possible to write 
$$E_{\gamma_{k}} = T_{\gamma_{k-1}} \cdots T_{\gamma_{2}} T_{\alpha_{i}} E_{\alpha_{k}}.$$

Now consider the pair $( s_{i} \Pi , s_{i} \Om )$. Denote the simple roots by $\alpha_{j}^{\prime} = s_{i} \alpha_{j}$ and the corresponding simple reflections $s_{j}^{\prime} = s_{i} s_{j} s_{i}$. Then the corresponding reduced expression for the longest element is $w_{0} = s_{i_{2}}^{\prime} \cdots s_{i_{l}}^{\prime} s_{i}$ and as before if $\gamma_{k}^{\prime} = s_{i_{2}}^{\prime} \cdots s_{i_{k-1}}^{\prime} (\alpha_{i_{k}}^{\prime})$, then $\gamma_{k}^{\prime} = \gamma_{k+1}$ for $k+1 \neq l$ and $\gamma_{l} = -\alpha_{i}$.

Now, if $k+1 \neq l$ then 
\begin{align*}
E_{\gamma_{k}}^{\prime} &= E_{\gamma_{k+1}} \\
&= T_{\gamma_{k}} \cdots T_{\gamma_{2}} T_{\alpha_{i}} E_{\alpha_{i_{k+1}}} \\
&= T_{\gamma_{k}} \cdots T_{\gamma_{2}} E_{s_{i} \alpha_{i_{k+1}}} \ \ \text{by Equation~\ref{e:newrootvec}} \\
&= T_{\gamma_{k-1}^{\prime}} \cdots T_{\gamma_{1}^{\prime}} E_{\alpha_{k}^{\prime}} 
\end{align*}
so $E_{\gamma_{k}}^{\prime}$ is given by Equation~\ref{e:rootvecgamma}.

If $k+1 = l$ then, 
\begin{align*}
E_{\gamma_{l}}^{\prime} &= E_{-\alpha_{i}} \\
&= T_{w_{0}} (E_{\alpha_{i \ \check{} }}) \\
&= T_{\gamma_{l}} \cdots T_{\gamma_{2}} T_{\alpha_{i}} (E_{\alpha_{i \ \check{} }}) \\
&= T_{\gamma_{l-1}}^{\prime} \cdots T_{\gamma_{1}}^{\prime} E_{s_{i} \alpha_{i} \ \check{}} \\
&= T_{\gamma_{l-1}}^{\prime} \cdots T_{\gamma_{1}}^{\prime} E_{\alpha_{i}^{\prime} \ \check{}} 
\end{align*}
so again $E_{\gamma_{k}}^{\prime}$ is given by Equation~\ref{e:rootvecgamma}. Hence $\{ E_{\alpha} \}$ is adapted to the pair $( s_{i} \Pi , s_{i} \Om )$.

The proof of the second part will follow from Corollary~\ref{c:bracket}.
\end{proof}

Note that Proposition~\ref{p:basis} and Proposition~\ref{p:adapted} prove the main result, Theorem~\ref{t:main}.

Define $T_{C} = T_{\alpha_{i_{1}}} T_{\alpha_{i_{2}}} \cdots T_{\alpha_{i_{r}}}$, for some choice of compatible simple roots $\Pi = \{ \alpha_{1} , \ldots , \alpha_{r} \}$, with $C = s_{i_{1}} s_{i_{2}} \cdots s_{i_{r}}$. Since the $T_{\alpha}$ satisfy the braid relations, the operator does not depend on the choice of compatible simple roots $\Pi$.

\begin{prop}
The root vectors $E_{\alpha}$ satisfy $T_{C} E_{\alpha} = E_{C \alpha}$.
\end{prop}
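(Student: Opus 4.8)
The plan is to reduce the statement $T_C E_\alpha = E_{C\alpha}$ to the already-established identity \eqref{e:rootvec}, namely $E_{C\alpha} = \bigl(\prod_{j-i} T_{\gamma_j}\bigr) T_\alpha (E_\alpha)$ for a triple $\{\alpha,\gamma_j,C\alpha\}$ satisfying the fundamental relation, together with a compatibility of conjugation operators. Fix a height function $h$, the compatible simple roots $\Pi=\{\alpha_1,\dots,\alpha_r\}$, the Coxeter element $C=s_{i_1}\cdots s_{i_r}$, and the associated slice $\Gamma_h$. Write $\alpha=(i,n)$ so $C\alpha=(i,n+2)$, and let $\gamma_j=(j,n+1)$ for $j$ adjacent to $i$. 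Since $T_C=T_{\alpha_{i_1}}\cdots T_{\alpha_{i_r}}$ is independent of the compatible simple system (the $T_\alpha$ satisfy the braid relations), I am free to choose the reduced word for $C$ that is convenient for $\alpha$: one in which $s_i$ is written last, i.e. $C = s_{k_1}\cdots s_{k_{r-1}} s_i$ with $\alpha=(i,h(i))$ lying on the slice $\Gamma_h$ after the preceding reflections have been applied — more precisely, I want $C$ conjugated so that in the ``root-vector form'' $T_C = T_{\beta_{r}}\cdots T_{\beta_1}$ the innermost factors are exactly $T_\alpha$ followed by the $T_{\gamma_j}$, mirroring the grouping $\bigl(\prod_{j-i} T_{\gamma_j}\bigr)T_\alpha$ in \eqref{e:rootvec}.

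The key steps, in order: \textbf{(1)} Rewrite $T_C$ in the ``$\gamma$-form'': just as $T_{i_1}\cdots T_{i_k}=T_{\gamma_k}\cdots T_{\gamma_1}$ was derived above from $T_{\gamma_k}=(T_{i_1}\cdots T_{i_{k-1}})T_{i_k}(T_{i_1}\cdots T_{i_{k-1}})^{-1}$, do the analogous manipulation for the reduced word of $C$ adapted to the slice $\Gamma_h$, obtaining $T_C$ as a product of $T_\delta$ over the roots $\delta$ of the slice, ordered compatibly with source-to-sink moves. \textbf{(2)} Use the fact (from \cite{kirillov-thind}, recalled in the description of the bijection $R\to\Gammahat$) that applying $C$ advances each slice root $(k,m)$ to $(k,m+2)$ and that the slice $\Gamma_h$ meets the fundamental-relation picture around $\alpha$ exactly in $\alpha$ and its neighbours $\gamma_j$; this lets me choose the reduced word for $C$ so that the last simple reflection reflects the source over $i$, making the $\gamma$-form of $T_C$ begin (reading right-to-left) with $T_\alpha$ and then the $T_{\gamma_j}$. \textbf{(3)} Apply both sides to $E_\alpha$. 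The factors of $T_C$ lying ``before'' $T_\alpha$ fix $E_\alpha$ because the corresponding reflections act trivially on the weight $\alpha$ and, by Lemma 2.? (the Jantzen lemma, $w(\alpha_i)\in R_+ \Rightarrow T_w E_i\in U_q^+$, with the refinement $w(\alpha_i)=\alpha_j\Rightarrow T_w E_i=E_j$), send $E_\alpha$ to itself; the remaining factors are precisely $\bigl(\prod_{j-i}T_{\gamma_j}\bigr)T_\alpha$, so \eqref{e:rootvec} finishes the case $\alpha,\gamma_j,C\alpha\in R_+^h$. \textbf{(4)} Handle the roots not on the positive side of a single height function by the change-of-orientation machinery: Theorem~\ref{p:adapted} says the root basis is adapted to every pair $(\Pi_h,\Om_h)$, so for any $\alpha$ one picks a height function making both $\alpha$ and $C\alpha$ positive (possible since $C$ only shifts the $\Z_{2h}$-coordinate by $2$ and $h$ ranges over all slices), and reruns step (3) there; independence of $T_C$ from the compatible simple system guarantees consistency.

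The main obstacle I expect is \textbf{step (2)–(3)}: making rigorous the claim that the ``outer'' factors of $T_C$ act as the identity on $E_\alpha$. This requires knowing precisely which reduced word for $C$ realizes the slice-advancing picture and checking that the partial products $w$ occurring there satisfy $w(\text{the relevant simple root})=\alpha$ or act trivially on the $\alpha$-weight space — essentially repackaging the combinatorics of \cite{kirillov-thind} about how $C$ moves slices through $\Gammahat$. Once that bookkeeping is set up, everything else is a direct application of \eqref{e:rootvec} and the braid-relation independence of $T_C$. An alternative, possibly cleaner route for step (1)–(3) is to induct on $k$ in $\alpha=C^k\beta$ (with $\beta\in\Gamma_h$): the root vectors $E_\alpha$ were \emph{defined} inductively by \eqref{e:rootvec}, so it suffices to show $T_C$ intertwines one inductive step, i.e. $T_C\,E_{C^k\beta}=E_{C^{k+1}\beta}$, which is \eqref{e:rootvec} applied at level $k$ after commuting $T_C$ past the operators building $E_{C^k\beta}$ — and that commutation is again controlled by the braid relations plus the fact that $T_C$ corresponds to the twist $\tau$ under $R\to\Gammahat$.
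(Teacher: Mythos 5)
The paper states this Proposition without any proof — Section~\ref{s:rootvectors} ends immediately after it — so there is no reference argument to compare against, and the proposal must be judged on its own. Your overall strategy (put $T_C$ in its $\gamma$-form $T_{\gamma_r}\cdots T_{\gamma_1}$, line it up with the fundamental-relation identity~\eqref{e:rootvec}, and argue that the remaining factors of $T_C$ drop out) is the right one, and the inductive reformulation in your last paragraph — show the base case $T_C E_\beta = E_{C\beta}$ for $\beta$ on the slice, then propagate via $T_C T_\gamma T_C^{-1}=T_{C\gamma}$ (which follows from the inductive hypothesis applied to $E_\gamma$ and $F_\gamma$) together with the fundamental relation at the shifted level — is a clean way to organize the argument.

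The genuine gap is exactly the one you flag as ``the main obstacle,'' namely step~(3), and it is not merely bookkeeping. First, the stated mechanism is imprecise: you say the factors of $T_C$ lying outside the fundamental-relation block ``fix $E_\alpha$ because the corresponding reflections act trivially on the weight $\alpha$.'' In fact the extra factors behave in two different ways. Writing $\alpha=(i,n)$ with $i$ a source of the slice, the slice roots $\gamma_m$ at the same height level as $\alpha$ are orthogonal to $\alpha$ (indeed $\<(i,n),(j,n)\>_{\Gammahat}=\delta_{ij}$) and so those $T_{\gamma_m}$ commute past $T_\alpha$ and fix $T_\alpha E_\alpha$; but the slice roots at levels $\ge n+2$ are not orthogonal to $\alpha$ in general — instead one needs them to be orthogonal to $C\alpha=(i,n+2)$, so that they fix the \emph{output} $E_{C\alpha}$ rather than the input $E_\alpha$. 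For example, in type $A_3$ with $C=s_1s_2s_3$ one has $T_C=T_{\alpha_{123}}T_{\alpha_{12}}T_{\alpha_1}$, and the extra factor $T_{\alpha_{123}}$ stays on the left and fixes $E_{\alpha_2}=E_{C\alpha_1}$ because $(\alpha_{123},\alpha_2)=0$; it does not commute to the right of $T_{\alpha_1}$. So the ``reflections act trivially on the weight $\alpha$'' heuristic is wrong for half the factors. Second, even granting the corrected dichotomy, the needed orthogonality facts — that slice roots at the same level are pairwise orthogonal, and that slice roots at level $\ge n+2$ are orthogonal to $(i,n+2)$ — are nontrivial claims about the form $\<\cdot,\cdot\>_{\Gammahat}$ that have to be extracted from the recursion defining it; they are neither stated nor proved in your proposal, and the Jantzen lemma (about positivity of $T_w E_i$) does not supply them. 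Until the base case $T_C E_\beta=E_{C\beta}$ for $\beta\in\Gamma_h$ is actually carried out along these lines, the proof is incomplete.

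One further small caution: in your last paragraph you invoke ``the fact that $T_C$ corresponds to the twist $\tau$ under $R\to\Gammahat$'' to control the commutation. What the cited results give is that the Weyl group element $C$ corresponds to $\tau$; that the \emph{braid operator} $T_C$ intertwines the root vectors in the same way is precisely the content of the Proposition, so this cannot be used as an input without circularity.
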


\section{Ringel-Hall Algebras}\label{s:ringelhall}

In this section Ringel and Peng and Xiao's approaches to constructing the Lie algebra $\g$ from quiver theory is reviewed. This is then related to the construction given in the previous section. For more details on Ringel's construction see \cite{ringel}, \cite{ringel2}, \cite{dengxiao}. For more details on Peng and Xiao's construction see \cite{px} and \cite{px2}.

Let $\Om$ be a fixed orientation of $\Gamma$ and denote by $\overrightarrow{\Gamma} = (\Gamma, \Om )$ the corresponding quiver. Fix $\mathbb{K}$, a finite field of order $p$. Let $\Rep (\overrightarrow{\Gamma} )$ be the category of representations of this quiver over the field $\mathbb{K}$, and denote by $\mathcal{K}$ its Grothendieck group. Denote by $\Ind \subset \mathcal{K}$ the set of classes of indecomposable objects.  Then Gabriel's Theorem gives an identification $\Ind \to R_{+}$ between indecomposable objects and positive roots of the corresponding root system. Moreover, if $\< \cdot , \cdot \>$ is defined on $\mathcal{K}$ by $\< X , Y \> = \dim \Hom (X, Y) - \dim \Ext^{1} (X,Y)$, then the form given by $( X,Y ) = \< X,Y \> + \< Y,X \>$ is identified with the bilinear form on the root lattice. The form $\< \cdot , \cdot \>$ is called the Euler form.

Ringel then constructed an associative algebra $( \Hall_{p} , \ast )$ as follows:

\begin{enumerate}
\item As a vector space $\Hall_{p}$ is spanned by $[M] \in \mathcal{K}$.
\item For objects $M,N,L$ define $F_{M,N}^{L} = | \{ X\subset L | X\simeq M \ \text{and} \ L/X \simeq N \} |$. (Since $\mathbb{K}$ is finite, this number is well-defined.)
\item Define an operation $\ast$ on $\Hall_{p}$ by the formula $ [M] \ast [N] = \sum_{ [L]} F_{M,N}^{L} [L]$. 
\end{enumerate}

The following Theorem summarizes the main results of Ringel.
\begin{thm} Let $( \Hall_{p} , \ast)$ be the algebra defined above.
\begin{enumerate} 
\item For $n = (n_{\alpha}) \in (\Z_{+})^{R_{+}}$ set $M_{n} = \bigoplus n_{\alpha} M_{\alpha}$ where $M_{\alpha}$ is the indecomposable corresponding to root $\alpha$. Then $\{ [M_{n}] \}$ is a PBW-type basis of the algebra $\Hall_{p}$, so that all structure constants $F_{[M] , [N] }$ are in $\Z [p]$. Hence the Hall algebra can be considered with $p$ as a formal parameter. After making the substitution $q=p^{1/2}$, $\Hall_{q}$ can be identified with $U_{q} \n_{+}$.
\item For $q = 1$, this gives an isomorphism $\Psi : U \n_{+} \to \Hall_{1}$ which is given by $E_{\alpha} \mapsto [M_{\alpha}]$, where $M_{\alpha}$ denotes the indecomposable representation of $\overrightarrow{\Gamma}$ corresponding to root $\alpha$. In particular the set $\{ [M_{\alpha} ] \}$ is a root basis for $\n_{+}$.
\item In the case $q=1$, the Lie bracket $[ \cdot , \cdot ]$ is given by $[ M_{\alpha} , M_{\beta} ] = (-1)^{\< M_{\alpha} , M_{\beta} \> } M_{\alpha + \beta}$ for $\alpha + \beta \in R$. Here $\< \cdot , \cdot \>$ is the Euler form.
\end{enumerate}
\end{thm}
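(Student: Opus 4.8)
The plan is to reconstruct Ringel's original argument for this theorem, taking Gabriel's theorem and the Auslander--Reiten theory of representation-directed algebras as input. \emph{A PBW basis and Hall polynomials.} Since $\overrightarrow{\Gamma}$ is of type $A,D,E$ it is representation-directed, so the finitely many indecomposables $M_1,\dots,M_N$ can be linearly ordered so that the Hall product of classes taken in non-decreasing order is unitriangular, with leading term the class of the corresponding direct sum; this shows that the monomials $\{[M_n]\}$ form a basis of $\Hall_p$. To see that the structure constants lie in $\Z[p]$ I would invoke the Hall polynomial property: for a representation-directed algebra the variety of submodules of $L$ isomorphic to $M$ with quotient $N$ carries a cell-type stratification, so $F^L_{M,N}$ is the value at $p=|\mathbb{K}|$ of a polynomial with non-negative integer coefficients. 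Hence $\Hall$ is defined over $\Z[p]$; substituting $p=q^2$ and twisting the multiplication by the Euler form in the standard way yields $\Hall_q$.

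\emph{Identification with $U_q\n_+$.} Let $\mathcal{C}_q\subseteq\Hall_q$ be the subalgebra generated by the classes $u_i:=[S_i]$ of the simple modules. A direct computation of Hall numbers in the rank-$\le2$ cases $A_1\times A_1$ and $A_2$ shows that the $u_i$ satisfy the quantum Serre relations, so there is an algebra homomorphism $U_q\n_+\to\mathcal{C}_q$ with $E_i\mapsto u_i$. Using the BGP reflection functors --- available since $\overrightarrow{\Gamma}$ is Dynkin --- one realizes each $[M_\alpha]$ as an iterated image of some $[S_i]$, so that $\mathcal{C}_q=\Hall_q$ and the homomorphism is onto; comparing the PBW basis $\{[M_n]\}$ with the PBW basis of $U_q\n_+$, both indexed by $(\Z_{\ge0})^{R_+}$, shows it is an isomorphism. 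This gives part (1), and part (2) is its specialization to $q=1$.

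\emph{The case $q=1$, the root basis, and the bracket.} At $q=1$ the isomorphism is $\Psi\colon U\n_+\to\Hall_1$, $E_i\mapsto[S_i]$; it restricts to a Lie algebra isomorphism of $\n_+$ (with commutator) onto the Lie subalgebra $\mathfrak{L}:=\Psi(\n_+)$, which is the span of the iterated commutators of the $[S_i]$. The key input now is Ringel's lemma: if $L$ is \emph{decomposable} then $F^L_{M,N}(1)=F^L_{N,M}(1)$, so $[M]\ast[N]-[N]\ast[M]$ always lies in the span of the classes of indecomposables; this is read off from the Riedtmann--Peng formula $F^L_{M,N}=\tfrac{|\Ext^1(N,M)_L|}{|\Hom(N,M)|}\cdot\tfrac{a_L}{a_M a_N}$, in which the automorphism factors $a$ cancel to leave an honest polynomial in $p$. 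It follows that $\mathfrak{L}\subseteq\bigoplus_{\alpha\in R_+}\mathbb{C}\,[M_\alpha]$, and since both spaces have dimension $|R_+|$ they are equal. By induction on the height of $\alpha$ --- writing a non-simple $\alpha$ as $\alpha_i+\beta$ with $\alpha_i$ simple, $\beta\in R_+$, so that $\g_\alpha=[\g_{\alpha_i},\g_\beta]$ --- one gets $\Psi(\g_\alpha)=\mathbb{C}\,[M_\alpha]$ for every $\alpha$, and normalizing the root vectors so that $\Psi(E_\alpha)=[M_\alpha]$ gives part (2). For part (3), $[E_\alpha,E_\beta]$ corresponds under $\Psi$ to $[M_\alpha]\ast[M_\beta]-[M_\beta]\ast[M_\alpha]$: if $\alpha+\beta\notin R$ it lies in $\Psi(\g_{\alpha+\beta})=0$; if $\alpha+\beta\in R$, Ringel's lemma forces it to equal $\big(F^{M_{\alpha+\beta}}_{M_\alpha,M_\beta}(1)-F^{M_{\alpha+\beta}}_{M_\beta,M_\alpha}(1)\big)[M_{\alpha+\beta}]$, and evaluating this via Riedtmann--Peng --- using that $M_\alpha,M_\beta,M_{\alpha+\beta}$ are bricks and that exactly one of $\Ext^1(M_\alpha,M_\beta)$, $\Ext^1(M_\beta,M_\alpha)$ is one-dimensional while the other vanishes --- one obtains the coefficient $(-1)^{\langle M_\alpha,M_\beta\rangle}$.

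\emph{The main obstacle.} The technical heart is twofold: the Hall polynomial property together with the unitriangularity that produces the PBW basis, and Ringel's lemma controlling the commutators at $q=1$. Once these structural facts are in place, the Serre-relation check, the PBW comparison, and the Riedtmann--Peng computations are all routine finite verifications.
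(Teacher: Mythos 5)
The paper offers no proof of this theorem; it is introduced explicitly as a summary of Ringel's main results, with the reader referred to [R1], [R2], and [DX]. So there is no argument in the paper to compare against step by step --- what you have written is a reconstruction of Ringel's original proof. Your reconstruction follows the standard route and is correct in outline: representation-directedness gives the Hall polynomials and the unitriangular PBW-type basis; the quantum Serre relations for the classes of simples, together with BGP reflection functors and a PBW dimension count, identify $\Hall_q$ with $U_q\n_+$; and the $q=1$ specialization, plus control over which Hall-algebra commutators survive, yields the root basis and the sign $(-1)^{\langle\alpha,\beta\rangle}$.

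One step should not be passed off as ``read off.'' You deduce the symmetry $F^{L}_{M,N}(1)=F^{L}_{N,M}(1)$ for decomposable $L$ from the Riedtmann--Peng formula, on the grounds that the automorphism factors cancel to leave an honest polynomial in $p$. That cancellation is what makes $F^{L}_{M,N}$ a polynomial in the first place; it does not by itself give the claimed symmetry at $p=1$. The symmetry for decomposable $L$ is a separate, genuinely nontrivial lemma of Ringel --- the key structural input forcing the commutator Lie subalgebra $\mathfrak{L}\subseteq\Hall_1$ to land inside $\bigoplus_{\alpha\in R_+}\CC\,[M_\alpha]$ --- and it needs an actual argument (a degeneration/filtration argument specific to the representation-directed case) rather than an appeal to the shape of the Riedtmann--Peng formula. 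Once that lemma is in place, the remaining steps you describe (the Serre-relation check, the PBW comparison, and the one-dimensional $\Ext^1$ computation pinning the sign to $(-1)^{\langle\alpha,\beta\rangle}$) are indeed routine.
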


The polynomial $F_{M,N}^{L} (p)$ appearing in Part 1 of the Theorem is called the ``Hall polynomial".

As mentioned before, Peng and Xiao extended the results of Ringel to obtain a description for all of $\g$. This construction is briefly recalled here. For a more details see \cite{px} , \cite{px2}.

Peng and Xiao considered the ``root category", $\D = \D^{b} (\overrightarrow{\Gamma}) / T^{2}$, so that indecomposable objects are in bijection with all roots. If $M \in \Rep (\overrightarrow{\Gamma})$ is indecomposable, then considering this as a complex concentrated in degree 0, $M$ is also indecomposable in $\D$. These objects correspond to positive roots, while their translates, $TM$, correspond to negative roots. (Up to isomorphism, this is a full description of indecomposable objects in $\D$.) Denote by $M_{\alpha}$ the class of indecomposable corresponding to root $\alpha \in R_{+}$. Peng and Xiao then constructed a Lie algebra $\Hall_{\D}$ as follows:
\begin{enumerate}
\item Set $\Hall_{\D} = \mathfrak{N} \oplus \mathfrak{H}$ where $\mathfrak{H} = \mathcal{K} (\D)$ and $\mathfrak{N}$ is the free abelian group with basis $\{ u_{[M]} \}$ indexed by isomorphism classes of objects $[M]$. 
\item Let $h_{M} = [M] \in \mathcal{K} (\D)$.
\item Define a bilinear operation $[ \cdot , \cdot ]$ on $\Hall_{\D}$ by:
	\begin{enumerate}
	\item $[ \mathfrak{H} , \mathfrak{H} ] = 0$
	\item $[u_{M} , u_{N} ] = \sum_{[L]} (F_{M,N}^{L} (1) - F_{N,M}^{L} (1)) u_{L}$,  for $N \neq TM$, where $F_{M,N}^{L}$ is the Hall polynomial.
	\item $[ u_{M} , u_{TM} ] = \frac{h_{M}}{d(M)}$ where $d(M) = \dim \End (M)$
	\item $[ h_{M} , u_{N} ] = -( M, N )_{\D} u_{N} = -[ u_{N} , h_{M} ]$ where $( \cdot , \cdot )_{\D}$ is the symmetrized Euler form on $\mathcal{K} (\D)$. 
	\end{enumerate}
\item For $\alpha \in R_{+}$, let $h_{\alpha} = h_{M}$ where $\dim M_{\alpha} = \alpha$.
\item For $\alpha \in R_{+}$, let $M_{\alpha} = [ M_{\alpha} ]$ where $\dim M_{\alpha} = \alpha$.
\item For $\alpha \in R_{+}$ let $M_{-\alpha} = -[TM_{\alpha}]$ where $\dim M_{\alpha} = \alpha$.
\end{enumerate}

\begin{thm} Let $( \Hall_{\D} , [ \cdot , \cdot ] )$ be defined as above.
\begin{enumerate}
\item $( \Hall_{\D} , [ \cdot , \cdot ] )$ is a Lie algebra.
\item The collection $\{ M_{\alpha} , M_{-\alpha} \}_{\alpha \in R_{+}}$ defined above is a root basis for $\Hall_{\D}$.
\item The map given by $E_{\alpha} \mapsto M_{\alpha}$, $F_{\alpha} \mapsto M_{-\alpha}$ and $H_{\alpha} \mapsto h_{\alpha}$ for $\alpha \in R_{+}$ induces an isomorphism of Lie algebras $\g \to\Hall_{\D}$. Hence $\Hall_{\D}$ can be identified with the $\Z$-form of $\g$. 
\end{enumerate}
\end{thm}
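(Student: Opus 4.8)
The theorem is due to Peng and Xiao; the plan is to recall a proof organised so that the comparison with the Coxeter basis $\{E_{\alpha}\}$ of Theorem~\ref{t:main} becomes visible, establishing first part (1), then part (3), and finally part (2). The two main ingredients are Ringel's theorem above and the self-duality of the root category $\D=\D^{b}(\overrightarrow{\Gamma})/T^{2}$, so that $T$ is an autoequivalence exchanging $R_{+}$-indecomposables with their negatives. Antisymmetry of $[\cdot,\cdot]$ is immediate from the defining rules: $F_{M,N}^{L}(1)-F_{N,M}^{L}(1)$ is manifestly antisymmetric in $M,N$, rule (d) is antisymmetric by fiat, and $[u_{M},u_{TM}]$ behaves correctly under $M\leftrightarrow TM$ because $T^{2}\simeq\id$ and $\dim\End(M)=\dim\End(TM)$.

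The content of part (1) is the Jacobi identity, which I would check on triples of basis vectors, splitting by how many of the three lie in $\mathfrak{H}=\mathcal{K}(\D)$. With two or three in $\mathfrak{H}$ it is trivial, as $[\mathfrak{H},\mathfrak{H}]=0$. With exactly one, say $h_{M}$, it reduces to the compatibility of the scalar action of $\mathfrak{H}$ with the $\mathcal{K}(\D)$-grading, using that the symmetrized Euler form is additive on triangles and that $[u_{N},u_{L}]$ is supported in degree $\dim N+\dim L$; this is routine. The hard case is three elements $u_{M},u_{N},u_{L}$ of $\mathfrak{N}$: expanding via the structure constants converts the Jacobi identity into an identity among Hall numbers of $\D$, namely the $p=1$ specialization of the associativity law for Ringel's Hall algebra transported to the root category (a ``Green's formula''--type identity), together with the extra contributions produced by rule (c) whenever two of $M,N,L,TM,TN,TL$ coincide. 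Showing that these auxiliary terms cancel against the Hall-number terms is the main obstacle; this is precisely the computation carried out in \cite{px}, \cite{px2}, which I would invoke rather than repeat.

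For part (3), put $E_{i}:=[S_{i}]$, $F_{i}:=-[TS_{i}]$, $H_{i}:=h_{S_{i}}$, with $S_{i}$ the simple module at vertex $i$, and check the Chevalley--Serre relations: $[H_{i},H_{j}]=0$ and $[H_{i},E_{j}]=a_{ij}E_{j}$, $[H_{i},F_{j}]=-a_{ij}F_{j}$ follow from rules (a), (d) once the symmetrized Euler form restricted to simple roots is identified with the Cartan matrix $(a_{ij})$ (after fixing signs by the conventions in rules (c), (d)); $[E_{i},F_{i}]=H_{i}$ follows from rule (c) with $d(S_{i})=\dim\End(S_{i})=1$; and the Serre relations $(\ad E_{i})^{1-a_{ij}}E_{j}=0$ follow from Ringel's theorem for the $E_{i}$, which identifies the span of the $u_{[M]}$ with $M$ a genuine module with Ringel's Hall Lie algebra $\cong\n_{+}$, and from its image under $T$ for the $F_{i}$. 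Serre's presentation then yields a Lie algebra homomorphism $\varphi\colon\g\to\Hall_{\D}$; since $\varphi$ is nonzero and respects the $\mathcal{K}(\D)$-weight grading, $\ker\varphi$ is a proper graded ideal of the simple Lie algebra $\g$, hence $0$, so $\varphi$ is an embedding. All structure constants of $\Hall_{\D}$ are values at $p=1$ of Hall polynomials, hence integers, so $\varphi$ identifies $\g$ together with its Chevalley $\Z$-form with its image.

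For part (2), it remains to describe the image $\varphi(\g)$. By Gabriel's theorem the indecomposable modules over $\overrightarrow{\Gamma}$ correspond to $R_{+}$, their classes being Ringel's PBW root vectors; applying $T$ gives the negatives; and the $[S_{i}]$ span $\mathcal{K}(\D)=\mathfrak{H}$. Hence, using Ringel's theorem and its $T$-translate, $\varphi(E_{\alpha})=\pm M_{\alpha}$ and $\varphi(F_{\alpha})=\pm M_{-\alpha}$ for $\alpha\in R_{+}$, so $\varphi(\g)=\bigoplus_{\alpha\in R_{+}}(\Z M_{\alpha}\oplus\Z M_{-\alpha})\oplus\mathfrak{H}$, free of rank $|R|+|\Gamma|=\dim\g$. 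Under $\varphi$ the $\mathcal{K}(\D)$-grading becomes the root-space grading of $\g$, with $M_{\pm\alpha}$ of weight $\pm\alpha$ and $\mathfrak{H}$ the zero-weight space, so $\{M_{\alpha},M_{-\alpha}\}_{\alpha\in R_{+}}$ is a root basis. Finally, this basis matches the Coxeter basis of Theorem~\ref{p:adapted}: for $\alpha+\beta\in R$ both give $[M_{\alpha},M_{\beta}]=(-1)^{\<M_{\alpha},M_{\beta}\>}M_{\alpha+\beta}$ once the Euler form of $\overrightarrow{\Gamma}$ is identified with the de-symmetrized form $\<\cdot,\cdot\>$ via \cite{kirillov-thind}, which is the link with Theorem~\ref{t:main} announced in the introduction.
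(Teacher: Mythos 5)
The paper does not prove this theorem: it is recalled from Peng--Xiao and immediately followed by ``For details see \cite{px} Section 4,'' so there is no in-paper argument to compare against. Your sketch is an expansion of that citation rather than a departure from it, and it correctly locates the genuine content --- the Jacobi identity on triples from $\mathfrak{N}$, i.e.\ the $p=1$ associativity/Green's-formula identity together with the cancellation of the rule-(c) terms --- in \cite{px}, \cite{px2}, which is also where the paper places it. The subsequent route (Serre relations to get $\varphi\colon\g\to\Hall_{\D}\otimes\CC$, injectivity from the weight grading and simplicity of $\g$, surjectivity and the $\Z$-form by matching the $M_{\pm\alpha}$ with Ringel's PBW classes and their $T$-translates, with $\mathfrak{H}$ as Cartan) is the standard one and is consistent with the reference. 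One small detail: the antisymmetry of rule (c) also uses $h_{TM}=-h_{M}$ in $\mathcal{K}(\D)$ (from the rotated triangle $M\to 0\to TM\to TM$), not only $T^{2}\simeq\id$ and $d(M)=d(TM)$; worth saying explicitly, since it is the same relation that later makes $\mathfrak{H}$ into the Cartan subalgebra with $[h_{M},u_{N}]$ weighted by the symmetrized Euler form.
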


For details see \cite{px} Section 4.

Recall that given a height function $h$, there is a corresponding set of simple roots $\Pi_{h}$ and a polarization $R= R_{+}^{h} \cup R_{-}^{h}$. Let $E_{\alpha}$ be the root vectors defined in Section~\ref{s:rootvectors}.  Define a triangular decomposition $\g = \n_{-}^{h} \oplus \h \oplus \n_{+}^{h}$ by setting $\n_{\pm}^{h} = \< E_{\alpha} \>_{\alpha \in R_{\pm}^{h}}$. 

A height function $h$ also gives an orientation $\Om_{h}$ of $\Gamma$ and hence a quiver $\overrightarrow{\Gamma} = (\Gamma , \Om_{h})$. As above, denote by $\mathcal{K}$ the corresponding Grothendieck group, and by $\Ind \subset \mathcal{K}$ the set of indecomposable classes in $\mathcal{K}$. Then there is a bijection  $R_{+}^{h} \to \Ind$, given by $\alpha \mapsto [M_{\alpha}]$. 

\begin{prop}\label{p:identification}
Let $h$ be a height function. Then the identification $R_{+}^{h} \to \Ind$ in $\Rep(\overrightarrow{\Gamma})$ induces an isomorphim $U \n_{+}^{h} \to \Hall_{1}$ given by $E_{\alpha} \mapsto [M_{\alpha}]$. 
\\
Moreover, the identification $R \to \Ind(\D)$ in the root category $\D$ gives an isomorphism $\g - \h \to \Hall_{\D} - \mathfrak{H}$, given by $E_{\alpha} \mapsto [M_{\alpha}]$, $E_{-\alpha} \mapsto -[TM_{\alpha}]$ for $\alpha \in R_{+}^{h}$.
\end{prop}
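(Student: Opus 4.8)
The plan is to leverage the identification of root bases already established in the excerpt and to reduce everything to the well-orientation-compatible case where Ringel's and Peng--Xiao's theorems apply directly. The key observation is that by Theorem~\ref{p:adapted}, the root basis $\{E_\alpha\}$ constructed in Section~\ref{s:rootvectors} is adapted to the pair $(\Pi_h, \Om_h)$ for the chosen height function $h$, so it suffices to understand how the \emph{adapted} root vectors of Equation~\ref{e:stdrootvec} correspond to indecomposable representations of $\overrightarrow{\Gamma} = (\Gamma, \Om_h)$.

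First I would recall the standard fact in quiver representation theory (due to Ringel, see \cite{ringel2}, \cite{dengxiao}) that a reduced expression $w_0 = s_{i_1} \cdots s_{i_l}$ adapted to $\Om_h$ induces a total ordering $\gamma_1, \ldots, \gamma_l$ of $R_+^h$, and that the indecomposable representation $M_{\gamma_k}$ is obtained from the simple representation $S_{i_k}$ by applying the BGP reflection functors $C_{i_1}^+ \cdots C_{i_{k-1}}^+$ along the sequence of sinks/sources, in exact parallel to how $E_{\gamma_k} = T_{i_1} \cdots T_{i_{k-1}}(E_{i_k})$ is built from $E_{i_k}$ by the braid operators. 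In other words, the $q=1$ reflection operators $T_i$ on $U\n_+$ match the reflection functors on $\Rep(\overrightarrow{\Gamma})$ under Ringel's isomorphism $\Psi \colon U\n_+ \to \Hall_1$. This correspondence, combined with the fact that both $\{E_\alpha\}_{\alpha \in R_+^h}$ and $\{[M_\alpha]\}_{\alpha \in R_+^h}$ are genuine root bases (a basis vector in each root space $\g_\alpha$), shows that $\Psi$ sends $E_\alpha$ to $[M_\alpha]$ and hence is the desired isomorphism $U\n_+^h \to \Hall_1$.

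For the second statement, I would lift this to the root category $\D = \D^b(\overrightarrow{\Gamma})/T^2$ using Peng--Xiao's construction. The positive part is handled by the argument above: $E_\alpha \mapsto [M_\alpha]$ for $\alpha \in R_+^h$, viewing $M_\alpha$ as a complex in degree $0$. For the negative roots, I would use Corollary~\ref{c:jantzen}, which identifies $E_{-\alpha_i} = T_{w_0}(E_{\alpha_{i\check{}}}) = -F_i K_i$ (at $q=1$ this is just $-F_i$, i.e.\ the Chevalley generator in $\n_-$); on the representation side, the indecomposable corresponding to $-\alpha$ is $TM_\alpha$, its shift, and Peng--Xiao's normalization sets $M_{-\alpha} = -[TM_\alpha]$. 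Since $T_{w_0}$ acts on the root category as the shift functor $T$ (this is the categorical avatar of $w_0$ acting by $-w_0$ composed with the natural automorphism, and is precisely the statement that $\tau^h = T^{2}/\!/$ so that $w_0$-type operators implement the shift), the compatibility of the minus signs and the match $E_{-\alpha} \mapsto -[TM_\alpha]$ follows. The isomorphism $\g - \h \to \Hall_\D - \mathfrak{H}$ of the underlying vector spaces together with the bracket formulas then gives the claim, modulo checking that the $\h$-parts also correspond, which is a routine comparison of $[E_\alpha, E_{-\alpha}]$ in $\g$ with $[u_{M_\alpha}, u_{TM_\alpha}] = h_{M_\alpha}/d(M_\alpha)$ in $\Hall_\D$ using that $d(M_\alpha) = 1$ for indecomposables in the simply-laced case.

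The main obstacle I anticipate is making precise the claim that the braid group operator $T_{w_0}$ (or more locally the $T_i$) corresponds under Ringel's isomorphism to the reflection functor $C_i^\pm$ (respectively the shift $T$ on $\D$). This is "known" but is spread across the literature and depends on carefully matching conventions: the twist $\tau$ on $\Gammahat$, the Coxeter functor $C^+ = C_{i_l}^+ \cdots C_{i_1}^+$, and the operator $T_C$ introduced at the end of Section~\ref{s:rootvectors} all need to be shown to implement the same map. I would handle this by invoking the explicit description of positive and negative roots as connected subquivers of $\Gammahat$ from \cite{kirillov-thind} (property (3) of the bijection $R \to \Gammahat$), so that the reduced expression for $w_0$ as a sequence of source-to-sink reflections taking the slice $\Gamma_{h^\Pi}$ to $\Gamma_{h^{-\Pi}}$ matches term-by-term the sequence of BGP functors taking $\Rep(\overrightarrow{\Gamma})$ through its Auslander--Reiten quiver; once the reduced expressions are matched on the nose, the identification $E_\alpha \leftrightarrow [M_\alpha]$ is forced by induction along the ordering of roots, with the inductive step being exactly Ringel's reflection-functor formula paired with Equation~\ref{e:newrootvec}.
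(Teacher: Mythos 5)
The paper does not actually supply a proof of Proposition~\ref{p:identification}; it is stated bare and invoked in the proof of Corollary~\ref{c:bracket}, so there is no text to compare against. Your reconstruction uses the expected ingredients and is structured the right way: reduce via Theorem~\ref{p:adapted}(1) to the adapted case, invoke the Ringel-side fact that an adapted reduced expression for $w_0$ produces the indecomposables by iterated BGP reflection functors in lockstep with Equation~\ref{e:stdrootvec}, and then extend to the root category by matching the shift with the action of $T_{w_0}$ together with Corollary~\ref{c:jantzen}. You are also right that this last step (identifying the algebraic operator $T_{w_0}$ with the categorical shift $T$, and more generally the $T_i$ at $q=1$ with the reflection functors under Ringel's isomorphism) is the genuine content of the proposition and cannot be waved at; it must be stated as a precise lemma with conventions fixed, since both $T_i$ and $C_i^{\pm}$ change the pair $(\Pi,\Om)$ and only induce automorphisms after matching the Hall algebras for different orientations.

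One thing worth flagging before you commit this to paper is the sign bookkeeping for the negative part. Corollary~\ref{c:jantzen} at $q=1$ gives $T_{w_0}(E_{i\check{}})=-F_i$, whereas Peng--Xiao's convention has $F_\alpha\mapsto M_{-\alpha}=-[TM_\alpha]$. Your sentence glosses $-F_i$ as ``the Chevalley generator in $\n_-$,'' but it is $-F_i$, not $F_i$, and so the naive chain $E_{-\alpha}=-F_\alpha\mapsto -M_{-\alpha}=+[TM_\alpha]$ seems to disagree with the claimed $E_{-\alpha}\mapsto -[TM_\alpha]$. This may resolve once one is careful about whether $E_{-\alpha}$ in the Section~\ref{s:rootvectors} construction is literally $T_{w_0}(E_{\alpha\check{}})$ or differs from it by the ambiguity recorded in Proposition~\ref{p:basis}, but as written the sign does not visibly close up, and since the whole point of the proposition is to pin down the normalizations that make Corollary~\ref{c:bracket} come out with the cocycle $(-1)^{\<\cdot,\cdot\>}$, this is exactly the place where a loose sign would propagate. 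Your positive-part argument and the reduction via adaptedness are fine; tighten the $T_i$-versus-$C_i^\pm$ dictionary into a lemma and recheck the negative-root sign against Peng--Xiao's normalization, and the proof should be complete.
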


\begin{figure}[b]
\includegraphics[height=2.50in]{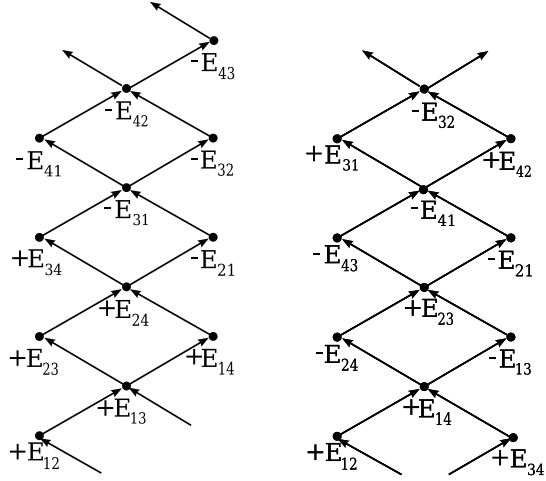}
\caption{Two different root bases for $\mathfrak{sl}_{4}$ coming from different choices of $C$. The case $C = (1234)$ is shown in the figure to the left. The case $C = (1243)$ is shown in the figure to the right. For each vertex in $\Gammahat$ the corresponding root vector $E_{\alpha}$ is shown in terms of the matrix units $E_{ij}$. Recall that $\Gammahat$ is periodic, so that arrows leaving the top level are identified with the incoming arrows on the bottom level.}\label{f:a3}
\end{figure}

\begin{cor}\label{c:bracket}
The Lie algebra $\g$ can be realised combinatorially in terms of $\Gammahat$: It has root basis $E_{\alpha}$ for $\alpha \in \Gammahat$ and Lie bracket given by 
\begin{equation}\label{e:bracket}
 [E_{\alpha} , E_{\beta} ] =\begin{cases}
(-1)^{ \< \alpha , \beta \>} E_{\alpha + \beta} &\text{ for } \alpha + \beta \in R \\
0 &\text{ for } \alpha + \beta \not \in R \text{ and } \alpha \neq -\beta
\end{cases}
\end{equation}
\end{cor}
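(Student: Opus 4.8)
The plan is to deduce Corollary~\ref{c:bracket} directly from Proposition~\ref{p:identification} together with Peng and Xiao's theorem, using the dictionary between the quiver $\Gammahat$ and the root category $\D$ established in \cite{kirillov-thind}. First I would fix a height function $h$, so that $\Gammahat$ contains the slice $\Gamma_h$, the set of compatible simple roots $\Pi_h$, the polarization $R = R_+^h \cup R_-^h$, and the orientation $\Om_h$; this gives a quiver $\overrightarrow{\Gamma} = (\Gamma, \Om_h)$, its root category $\D$, and the identification $R \to \Ind(\D)$. By Proposition~\ref{p:identification}, the map $E_\alpha \mapsto [M_\alpha]$ (and $E_{-\alpha}\mapsto -[TM_\alpha]$ for $\alpha\in R_+^h$) identifies $\g - \h$ with $\Hall_\D - \mathfrak H$; since this map is an isomorphism of Lie algebras and the root vectors $E_\alpha$ span the root spaces $\g_\alpha$, it suffices to compute the bracket $[M_\alpha, M_\beta]$ on the Peng--Xiao side and then translate the sign.

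Next I would unwind Peng--Xiao's bracket formula in the relevant cases. When $\alpha + \beta \in R$ and $\alpha, \beta, \alpha+\beta$ all lie in $R_+^h$, the term $[u_{M_\alpha}, u_{M_\beta}] = \sum_{[L]}(F_{M_\alpha, M_\beta}^L(1) - F_{M_\beta, M_\alpha}^L(1)) u_L$ reduces — because of the grading on the Grothendieck group and the fact that a root space is one-dimensional — to a single term $u_{M_{\alpha+\beta}}$ with integer coefficient, and this integer is known (from Ringel's analysis, recalled in the first theorem of Section~\ref{s:ringelhall}) to be $\pm(-1)^{\< M_\alpha, M_\beta\>}$ where $\<\cdot,\cdot\>$ is the Euler form; the remaining cases (one or more of $\alpha,\beta,\alpha+\beta$ negative, or $\alpha,\beta$ of opposite sign) are handled by the $[u_M, u_{TM}] = h_M/d(M)$ rule and $[h_M, u_N]$ rule, but those produce values in $\h$ or reproduce the $\pm(-1)^{\<\cdot,\cdot\>}$ pattern after applying the isomorphism and the sign conventions $M_{-\alpha} = -[TM_\alpha]$. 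The point here is that the fourth bullet of the list of properties of the bijection $R\to\Gammahat$ from \cite{kirillov-thind} identifies the Euler form $\<\cdot,\cdot\>$ on $\mathcal K$ with the combinatorial form $\<\cdot,\cdot\>_{\Gammahat}$, hence with the de-symmetrized form $\<\cdot,\cdot\>$ appearing in the statement; so the exponent $\<M_\alpha, M_\beta\>$ becomes exactly $\<\alpha,\beta\>$. The vanishing clause is immediate: if $\alpha+\beta\notin R$ and $\alpha\neq-\beta$, there is no root space $\g_{\alpha+\beta}$, so the bracket is zero.

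Finally I would check independence of the choice of height function $h$. The root basis $\{E_\alpha\}$ was defined in Section~\ref{s:rootvectors} starting from a single $h$, but Theorem~\ref{p:adapted}(1) shows this basis is adapted to $(\Pi_{h'}, \Om_{h'})$ for \emph{every} height function $h'$ — one passes between adjacent height functions by source/sink reflections and the braid relations. Consequently the isomorphism of Proposition~\ref{p:identification} is available for each $h'$, and the bracket formula obtained is the same; in particular the only data that survives is the Coxeter element $C$ and the quiver $\Gammahat$, which is what ``realised combinatorially in terms of $\Gammahat$'' means. Assembling these pieces gives Equation~\ref{e:bracket}, and also discharges the deferred second part of Theorem~\ref{p:adapted}.

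\textbf{Main obstacle.} The delicate point is pinning down the \emph{sign} $(-1)^{\<\alpha,\beta\>}$ rather than just $\pm 1$: one must verify that the integer $F_{M_\alpha, M_\beta}^{M_{\alpha+\beta}}(1) - F_{M_\beta, M_\alpha}^{M_{\alpha+\beta}}(1)$ really equals $(-1)^{\<M_\alpha,M_\beta\>}$ in the root-category setting and that the sign conventions $M_{-\alpha} = -[TM_\alpha]$ and the de-symmetrization $\<x,y\> = ((1-C)^{-1}x, y)$ match up coherently across all sign patterns of $\alpha$, $\beta$, $\alpha+\beta$; this bookkeeping, while not conceptually hard, is where an error would most easily creep in, and it is the reason the proof routes through the established isomorphisms of \cite{kirillov-thind} rather than attempting a direct computation with the $T_\alpha$ operators.
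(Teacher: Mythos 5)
Your overall strategy (route through the Hall algebra identification of Proposition~\ref{p:identification}, translate the Euler form to $\<\cdot,\cdot\>$ via the \cite{kirillov-thind} dictionary) is the right one, but you take a genuinely different and noticeably harder path through the second half, and as you yourself flag, that path is not actually completed.

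The paper's proof avoids the entire ``main obstacle'' you identify by a one-line reduction: for any $\alpha,\beta\in R$ with $\alpha\neq-\beta$ one may choose a set of simple roots $\Pi$ \emph{compatible with $C$} (equivalently, a slice $\Gamma_h\subset\Gammahat$) such that \emph{both} $\alpha$ and $\beta$ lie in $R_+^\Pi$. With this choice the whole computation lives inside $U\n_+^h\simeq\Hall_1$ and one invokes only Ringel's bracket formula (part~3 of the first theorem of Section~\ref{s:ringelhall}), together with Theorem~\ref{p:adapted}(1), which guarantees the fixed basis $\{E_\alpha\}$ is adapted to $(\Pi_h,\Om_h)$ for this new $h$. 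No Peng--Xiao rules for $[u_M,u_{TM}]$ or $[h_M,u_N]$ are ever touched, and no matching of signs across mixed-sign patterns of $\alpha,\beta,\alpha+\beta$ is needed. By contrast, you fix a single $h$ once and for all and then work in the root category $\D$, which forces you to analyze the cases where $\alpha$ or $\beta$ is negative. In those cases you assert that the Peng--Xiao formulas ``reproduce the $\pm(-1)^{\<\cdot,\cdot\>}$ pattern after applying the isomorphism and the sign conventions,'' but this is exactly the nontrivial claim that would have to be verified; leaving it at the level of an assertion is a genuine gap, and you correctly diagnose it as such in your final paragraph. The paper's reduction to simultaneous positivity is the missing idea: once you have it, your discussion of the translation $TM_\alpha$, of $h_M/d(M)$, and of independence of $h$ all becomes unnecessary, since the $h$ is chosen pair-by-pair to make Ringel's formula directly applicable.
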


\begin{proof}
The only thing to be checked is that in terms of the $E_{\alpha}$ constructed in Section~\ref{s:rootvectors}, the structure constants of the Lie bracket are given by Equation~\ref{e:bracket}. For $\alpha , \beta \in R$ with $\alpha \neq -\beta$ there is a choice of compatible simple roots $\Pi$ so that $\alpha , \beta \in R_{+}^{\Pi}$. Let $h$ be the corresponding height function. Then by Proposition~\ref{p:identification} the identification $U \n^{h}_{+} \simeq \Hall_{1}$ gives that $$[E_{\alpha} , E_{\beta} ] = [M_{\alpha} , M_{\beta} ]= (-1)^{\< \alpha , \beta \> } M_{\alpha + \beta} = (-1)^{\< \alpha , \beta \> } E_{\alpha + \beta}.$$
\end{proof}

\begin{remark} Note that the ``Euler cocylce" $(-1)^{ \< \cdot , \cdot \>}$, defines a cohomologous cocycle, and hence the same extension, as in the construction of $\g$ given in \cite{flm}.
\end{remark}

\begin{example}\label{e:a42}
Consider the case $\Gamma = A_{3}$, so that $\g = \mathfrak{sl}_{4}$. Let $\h$ be the diagonal matrices. Then the roots are $\alpha = e_{i} - e_{j}$ for $i \neq j$, where $e_{k} (h) = h_{kk}$ for $h \in \h$. The root space corresponding to root $e_{i} - e_{j}$ is $\CC E_{ij}$, where $E_{ij}$ is the corresponding matrix unit. For two different choices of Coxeter element $C$, two different root bases are shown in  Figure~\ref{f:a3}. In each case the Lie bracket is then given by the Equation~\ref{e:bracket} and the form $\< \cdot , \cdot \>$ can be computed explicitly in terms of $\Gammahat$.
\end{example}

\bibliographystyle{amsalpha}

\begin{thebibliography}{EFK}


\bibitem[DX]{dengxiao}
B. Deng, J. Xiao, {\em On Ringel-Hall Algebras}, Fields Institute Communications, {\bf 40} (2004), 319--347.

\bibitem[FLM]{flm}
I. Frenkel, J. Lepowsky, A. Meurman, {\em Vertex Operator Algebras and The Monster}, Academic Press, Boston, 1988.

\bibitem[J]{jantzen}
J.C. Jantzen, {\em Letures on Quantum Groups}, Amererican Mathematical Society, Graduate Studies in Mathematics, Vol. 6, Providence, 1996.

\bibitem[KT]{kirillov-thind}
    A. Kirillov Jr., J. Thind
    {\em Coxeter Elements and Periodic Auslander-Reiten Quiver}, {\tt arXiv:math.RT/0703361}   

\bibitem[L]{lusztig}
G. Lusztig, {\em Canonical Bases Arising from Quantized Enveloping Algebras}, J. Amer. Math. Soc., {\bf 3} (1990), no. 2, 447--498.

\bibitem[O]{ocneanu}
A. Ocneanu,  {\em Quantum subgroups, canonical bases and higher tensor structures}, talk at the workshop {\em Tensor Categories in Mathematics and Physics},  Erwin Schr\"odinger Institute, Vienna, June 2004.

\bibitem[PX1]{px}
L. Peng, J. Xiao, {\em Root Categories and Simple Lie Algebras}, J. Algebra {\bf 198} (1997), no. 1, 19--56.

\bibitem[PX2]{px2}
L. Peng, J. Xiao, {\em Triangulated Categories and Kac-Moody algebras}, Invent. Math. {\bf 140} (2000), no.3, 563--603.

\bibitem[R1]{ringel}
C. Ringel, {\em Hall Algebras and Quantum Groups}, Invent. Math {\bf 101} (1990), 583--592.    
    
\bibitem[R2]{ringel2}
C. Ringel. {\em Hall Polynomials and Representation Finite Hereditary Algebras}, Advances in Mathematics {\bf 84} (1990), no. 2, 137--178.

\end{thebibliography}

\end{document}